\documentclass[12pt]{amsart}
\usepackage{amsmath, amsfonts, amsthm}
\usepackage{stmaryrd}
\usepackage{quiver}
\usepackage[a4paper, margin=1in]{geometry}
\usepackage{tikz-cd}
\usepackage{enumitem}
\usepackage{mathrsfs}
\usepackage{amsfonts, amsmath, amssymb, amsfonts}
\usepackage{amsthm}
\usepackage{hyperref}
\usepackage[dvipsnames]{xcolor}
\usepackage{mathtools}
\hypersetup{
    colorlinks=true,
    linkcolor=blue,
    filecolor=magenta,      
    urlcolor=cyan,
    citecolor=red
}
\usepackage{cleveref}
\usepackage{hyperref}
\hypersetup{
    colorlinks=true,
    linkcolor=blue,
    filecolor=magenta,      
    urlcolor=cyan,
    citecolor=red
}
\newcommand{\diffstrat}{\Mcal_{5, 4}^{(2)}}

\newtheorem{theorem}{Theorem}[section]
\newtheorem{lemma}[theorem]{Lemma}
\newtheorem{proposition}[theorem]{Proposition}
\newtheorem{corollary}[theorem]{Corollary}

\theoremstyle{remark}

\theoremstyle{definition}
\newtheorem{definition}[theorem]{Definition}

\newcommand{\ev}{\mathrm{ev}}

\newcommand{\Ebf}{\mathbf{E}}
\newcommand{\Fbf}{\mathbf{F}}

\newcommand{\Bscr}{\mathscr{B}}
\newcommand{\Qscr}{\mathscr{Q}}
\newcommand{\Yscr}{\mathscr{Y}}

\newcommand{\Xscr}{\mathscr{X}}

\newcommand{\Fscr}{\mathscr{F}}

\newcommand{\Ascr}{\mathscr{A}}

\newcommand{\Bcal}{\mathcal{B}}
\newcommand{\Acal}{\mathcal{A}}
\newcommand{\Hcal}{\mathcal{H}}
\newcommand{\Rcal}{\mathcal{R}}
\newcommand{\Ccal}{\mathcal{C}}
\newcommand{\Wcal}{\mathcal{W}}

\newcommand{\Scal}{\mathcal{S}}

\newcommand{\Ical}{\mathcal{I}}
\newcommand{\Vcal}{\mathcal{V}}
\newcommand{\Ocal}{\mathcal{O}}
\newcommand{\Ucal}{\mathcal{U}}

\newcommand{\Tcal}{\mathcal{T}}
\newcommand{\Mcal}{\mathcal{M}}

\newcommand{\PP}{\mathbb{P}}

\newcommand{\CC}{\mathbb{C}}

\newcommand{\QQ}{\mathbb{Q}}

\newcommand{\GG}{\mathbb{G}}

\newcommand{\pbf}{\mathbf{p}}

\newcommand{\rarr}{\rightarrow}

\newcommand{\gitquot}{/ \! \! /}

\newcommand{\mg}{\Mcal_g}

\newcommand{\mgn}{\Mcal_{g, n}}

\newcommand{\sg}{\Scal_g}

\newcommand{\sfive}{\Scal_5^-}
\newcommand{\sfivetilde}{\widetilde\Scal_5^-}

\newcommand{\Sing}{\textup{Sing}}

\newcommand{\pr}{\textup{pr}}
\newcommand{\disc}{\textup{disc}}
\newcommand{\mult}{\textup{mult}}

\newcommand{\SL}{\textup{SL}}

\newcommand{\PGL}{\textup{PGL}}
\newcommand{\GL}{\textup{GL}}
\renewcommand{\H}{\textup{H}}
\renewcommand{\P}{\textup{P}}
\newcommand{\R}{\textup{R}}

\newcommand{\Ker}{\textup{Ker}}
\renewcommand{\Im}{\textup{Im}}

\newcommand{\Pic}{\textup{Pic}}

\newcommand{\Aut}{\textup{Aut}}

\newcommand{\supp}{\textup{supp}}
\newcommand{\Bs}{\textup{Bs}}

\newcommand{\CH}{\textup{CH}}
\newcommand{\Sym}{\textup{Sym}}
\newcommand{\cl}{\textup{cl}}

\newcommand{\Sfrak}{\mathfrak{S}}

\begin{document}
\author{Bogdan Carasca}
\address{Humboldt Universität zu Berlin, Unter den Linden 6, 10117 Berlin}
\email{bogdanpetru.carasca@gmail.com}

\begin{abstract}
    The moduli spaces $\sg^-$ parametrise odd spin curves of genus $g$. These are pairs $[C, \eta]$ where $C$ is a smooth genus $g$ curve of and $\eta$ is a line bundle on $C$ such that $\eta^{\otimes 2} = \omega_C$ and $h^0(C, \eta)$ is odd. The main result of this work is the tautology of the Chow ring of $\sfive$. Our method of proof revolves around an analysis of the geometry of canonical genus 5 curves and totally tangent hyperplanes. In the course of establishing our main result, we also prove the rationality of the closely related differential stratum in $\Mcal_{5, 4}$ dominating $\sfive$.
\end{abstract}

\title[The Chow ring of $\Scal_5^-$] {The Chow ring of $\sfive$ is tautological}
\maketitle

\let\thefootnote\relax

\section{Introduction}

The moduli spaces $\sg$ parametrise objects of the form $[C, \eta]$ where $C$ is a smooth curve of genus $g$ and $\eta \in \Pic(C)$ is a line bundle such that $\eta^{\otimes 2} = \omega_C$. Such a a pair is called a spin curve. The forgetful map $\pi : \sg \rarr \mg$ exhibits a direct connection between $\sg$ and $\mg$, and it is finite of degree $2^{2g}$. Unlike $\mg$, the moduli spaces $\sg$ are not connected. In fact, $\sg$ has two irreducible components, $\sg^+$ and $\sg^-$, the moduli spaces of even (resp. odd) spin curves of genus $g$ -- see \cite{mumford1971thetaChar} and  \cite{atiyah1971spin}. We say that a spin curve $[C, \eta]$ is \textit{even} (resp. \textit{odd}) if the number of sections $h^0(C, \eta)$ is even (resp. odd).

\vskip 0.5em
In the landmark paper \cite{mumford1983towardEnumerativeGeometryMg}, Mumford began the study of the tautological ring $\R^\bullet(\mg)$ of $\mg$. Recall that $\R^\bullet(\mg)$ is the $\QQ$--subalgebra of $\CH^\bullet(\mg)$ generated by the $\kappa$--classes $\kappa_i := f_*(c_i(\omega_f)^{i +1})$, where $f : \Ccal_g \rarr \mg$ is the universal curve and $\omega_f$ is the relative dualising sheaf. Mumford then went on to ask whether the Chow ring $\CH^\bullet(\mg)$ could be tautological, at least in small genus. Regarding this question, significant progress has been made. With $\QQ$--coefficients, one finds the following results:
\begin{itemize}[label=--]
    \item Mumford determines the Chow ring $\CH^\bullet(\overline\Mcal_2)$ in \cite{mumford1983towardEnumerativeGeometryMg}.
    \item Faber determines the Chow rings $\CH^\bullet(\overline\Mcal_3)$ and $\CH^\bullet(\Mcal_4)$ in \cite{faber199ChowRingM3} and \cite{faber1990chowM4}.
    \item Izadi determines the Chow ring $\CH^\bullet(\Mcal_5)$ in \cite{izadi1995chowM5}.
    \item Vakil and Penev determine $\CH^\bullet(\Mcal_6)$ in \cite{vakil2015chowRingM6}.
    \item Canning and H. Larson determine $\CH^\bullet(\Mcal_g)$ for $g = 7, 8, 9$ in \cite{larson2024chowMg}.
\end{itemize}
In all the works above, the key step is showing the tautology of the Chow ring which, using work by Faber in \cite{faber1999conjDescTautMg}, allows one to compute the ideal of relations.

\vskip 0.5em
In contrast, much less is known about this problem for $\sg^\pm$. Define the tautological ring of $\sg^\pm$ (with rational coefficients) to be the subalgebra of $\CH^\bullet(\sg^\pm)$ generated by pullbacks of tautological classes on $\mg$ under the forgetful morphism $\sg^\pm \rarr \mg$. The cohomological tautological ring of $\sg^\pm$ is the image under the cycle class map $\cl : \CH^\bullet(\sg^\pm) \rarr \H^\bullet(\sg^\pm)$ of $\R^\bullet(\sg^\pm) \subset \CH^\bullet(\sg^\pm)$.
\begin{theorem}\label{thm: sfive has CH = R}
    The moduli space $\Scal_5^-$ of odd spin curves of genus $5$ has tautological Chow and cohomology rings.
\end{theorem}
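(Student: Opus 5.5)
We prove that $\CH^\bullet(\sfive)$ is tautological by stratifying $\sfive$ and controlling each stratum with the excision sequence
\[
\CH_\bullet(Z) \to \CH_\bullet(\sfive) \to \CH_\bullet(\sfive \setminus Z) \to 0 .
\]
For a general odd spin curve of genus $5$ we have $h^0(C,\eta)=1$. Then $\eta=\Ocal_C(D)$ with $D=p_1+p_2+p_3+p_4$ effective of degree $4$, and $2D$ is cut out on the canonical model $C\subset\PP^4$ by a totally tangent hyperplane. This gives a birational map from the differential stratum $\diffstrat\subset\Mcal_{5,4}$ of $4$-pointed curves with an abelian differential having zeroes of order $2$ at the marked points, modulo the $\Sfrak_4$ action, onto the open locus $h^0(\eta)=1$.

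The plan has four steps.
\begin{enumerate}
\item \emph{Generic stratum.} Describe the open locus $U\subset\sfive$ where $h^0(\eta)=1$ and $C$ is non-trigonal. There $C$ is a complete intersection of three quadrics in $\PP^4$. The data $(C,H)$ with $H\cap C=2D$ is equivalent to a net of quadrics together with a hyperplane $H$ such that the restricted net of quadrics in $H\cong\PP^3$ has base locus of the form $2(p_1+\dots+p_4)$. Fixing the four points $p_i$ in general position in $H$, together with the tangent lines at the $p_i$, the condition on the net is linear. So $U$ is a quotient of an open subset of a vector bundle (a Grassmannian bundle of nets) over a configuration space of points and tangent directions, by a group $G=\textup{Stab}\subset\PGL_5$.
\item \emph{Chow ring of the quotient.} Show that $\CH^\bullet(U)$ is generated by Chern classes of natural bundles coming from $BG$ and the Grassmannian bundle. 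This uses the standard lemma that an open subset of a vector bundle over a space with known Chow ring has Chow ring generated by pullbacks. It simultaneously yields rationality of $\diffstrat$, since the parameter space is rational.
\item \emph{Identifying the generators as tautological.} Express these Chern classes via the Hodge bundle $\pi_*\omega$ (whose projectivization contains the canonical embedding), the rank-$3$ bundle of quadrics containing the curve, and $\kappa$-classes. Grothendieck--Riemann--Roch on the universal curve writes their Chern classes in terms of $\kappa_i$ and $\lambda_i$, and the $\lambda_i$ are tautological by Mumford. Classes attached to $\eta$, such as the Chern classes of the line bundle $\pi_*\eta$ and of the hyperplane class restricted to $H$, must also be expressed this way. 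Here we use that $\eta^{2}=\omega$, so on the locus $h^0(\eta)=1$ the bundle $(\pi_*\eta)^{\otimes 2}$ is related to $\pi_*\omega$ via the unique section $\sigma$ with $\sigma^2$ the equation of $H$. Hence $c_1(\pi_*\eta)$ is half of a class pulled back from the Hodge bundle up to the tautological correction $[D]$ pushed forward. I expect this to be the main obstacle: each generator must be checked to be tautological in the precise sense of being pulled back from $\mg$, not merely natural on $\sfive$.
\item \emph{Boundary strata.} Treat the closed complement in pieces: the trigonal locus, the loci where $h^0(\eta)=3$ (which has codimension $3$, and for trigonal curves corresponds to $\eta=g^1_3+$ a point, among others), and the loci where $D$ has coincident points, i.e.\ higher tangency. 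For each piece $Z$, show that $\CH^\bullet(Z)$ pushes forward into the tautological ring. This is done either by exhibiting $Z$ as a quotient of an open subset of a linear space, so that its only classes are pushforwards of its fundamental class times restricted tautological classes, or by noting that $Z$ maps finitely onto a locus in $\mg$, such as the trigonal locus or the curves with vanishing thetanull, whose classes Izadi's computation \cite{izadi1995chowM5} shows to be tautological. We must then check that the fundamental classes $[Z]$ themselves are tautological, for instance via Porteous or Thom--Porteous applied to the map of bundles $\pi_*\omega \to$ jets at $D$.
\end{enumerate}

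Finally, cohomological tautology follows by applying the cycle class map $\cl$ to the Chow statement. This is immediate, since $\R^\bullet$ on the cohomology side is defined as the image under $\cl$ of the Chow tautological ring.
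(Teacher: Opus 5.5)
Your overall architecture matches the paper's: a generic stratum built from totally tangent hyperplanes and nets of quadrics, a Grassmann bundle over the restricted nets, Vistoli's theorem, and boundary strata handled by Porteous-type arguments. But Step~1 contains a genuine error, and it sits exactly where the difficulty lies.

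\textbf{The error in Step~1.} Fix $p_i\in H$ and tangent directions at them, and let $\xi=\xi_1\cup\dots\cup\xi_4\subset H$ be the resulting length-$8$ scheme.
\begin{itemize}
\item For general directions, $\xi$ imposes $8$ independent conditions on $\H^0(\Ocal_H(2))\cong\CC^{10}$. So only a \emph{pencil} of quadrics in $H$ contains $\xi$.
\item If $\xi$ is the base scheme of a net, it is a complete intersection of three quadrics, so $h^0(\Ical_{\xi/H}(2))=3$.
\item Hence admissible nets exist only over the locus $\Sigma$ where $\H^0(\Ocal_H(2))\to\H^0(\Ocal_\xi)$ drops rank. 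This is a codimension-$3$ determinantal locus. For fixed $p_i$ it is $5$-dimensional, consistent with $\dim\Ebf\gitquot\P G'=2$ in the paper.
\item Over a general configuration your \emph{open subset} is empty. The quadrics in $\PP^4$ containing $\xi$ form a $7$-dimensional space that contains the $5$-dimensional space $t_4\cdot\H^0(\Ocal_{\PP^4}(1))$. Since $3+5>7$, every net of such quadrics contains a quadric through $H$.
\end{itemize}
So the base of your Grassmann bundle is not the configuration space of points and directions. It is $\Sigma$, which is the paper's $\Ebf$ (resp.\ $\Bscr$), and neither its Chow ring nor its rationality is known a priori. Consequently Step~2 (``open subset of a vector bundle over a space with known Chow ring'') and your rationality claim for $\diffstrat$ are unsupported.

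\textbf{How the paper handles this.} Controlling $\Bscr$ is the core of the paper's argument.
\begin{itemize}
\item It passes to the $\PP^2$-bundle $\Ascr\to\Bscr$ of subpencils $P\subset\Gamma$.
\item The Segre/Wall analysis of nets in $\PP^3$ guarantees that a general subpencil has a smooth elliptic quartic $E=\Bs P$ as base locus.
\item Extending $P$ to an admissible net amounts to a quadric totally tangent to $E$ at the $p_i$, i.e.\ $\Ocal_E(1)(-p_1-\dots-p_4)$ being a non-trivial $2$-torsion class.
\item This relates $\Ascr/\PGL(5)$ to $\Rcal_{1;[4]}$, whose Chow ring is generated by $\psi$-classes (Krug). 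These are then converted to $\lambda$-classes using $\eta^{\otimes 2}=\omega$.
\item Rationality comes from unirationality of the surface $\Ebf\gitquot\P G'$ via $\Rcal_{1;4}$, together with Castelnuovo's criterion.
\end{itemize}
You need some substitute for this step.

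\textbf{Smaller issues.}
\begin{itemize}
\item By Clifford, $h^0(\eta)=3$ forces $C$ to be hyperelliptic with $\eta=g^2_4$. On a trigonal curve, $g^1_3+p$ has $h^0=2$ and is even, so it does not lie in $\sfive$.
\item The hyperelliptic component $\Scal\Hcal_5^0$, where $h^0(\eta)=1$, is covered neither by your generic stratum (there is no canonical embedding) nor by your list of boundary pieces.
\item Knowing that the image of a stratum in $\Mcal_5$ has tautological class says nothing about $\CH^\bullet$ of the stratum itself. The trigonal locus therefore needs its own model; the paper uses plane quintics with a node or cusp and totally tangent conics.
\item Applying $\cl$ to the Chow statement only shows that the image of $\cl$ is tautological, not that all of $\H^\bullet(\sfive)$ is.
\end{itemize}
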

Notice that Theorem \ref{thm: sfive has CH = R} in particular implies that $\pi^* : \CH^\bullet(\Mcal_5) \rarr \CH^\bullet(\sfive)$ is surjective so that, in fact, $\pi^*$ is an isomorphism. Since the Chow ring of $\Mcal_5$ is known, cf. \cite{izadi1995chowM5}, we obtain the following explicit formula for the Chow ring of $\sfive$.
\begin{corollary}
    The Chow and cohomology rings of $\sfive$ are given by
    \[
    \CH^\bullet(\sfive) \cong \H^\bullet(\sfive) \cong \QQ[\kappa_1]/(\kappa_1^4).
    \]
\end{corollary}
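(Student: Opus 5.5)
The corollary follows from Theorem \ref{thm: sfive has CH = R} together with Izadi's computation $\CH^\bullet(\Mcal_5) \cong \QQ[\kappa_1]/(\kappa_1^4)$ from \cite{izadi1995chowM5}. The plan is to show that $\pi^*$ is both surjective and injective in Chow, and then to pass to cohomology.

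\emph{Step 1: $\pi^*$ is an isomorphism on Chow groups.}
\begin{itemize}
\item \emph{Surjectivity.} By Theorem \ref{thm: sfive has CH = R}, $\CH^\bullet(\sfive)=\R^\bullet(\sfive)$. The latter is by definition generated by pullbacks of tautological classes of $\Mcal_5$. Since $\CH^\bullet(\Mcal_5)=\R^\bullet(\Mcal_5)$ by Izadi, we get $\pi^*\CH^\bullet(\Mcal_5)=\CH^\bullet(\sfive)$.
\item \emph{Injectivity.} The forgetful map $\pi:\sfive\rarr\Mcal_5$ is finite and surjective of degree $d=2^{4}(2^5-1)=496$, the number of odd theta characteristics. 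With rational coefficients, the projection formula gives $\pi_*\pi^*\alpha=d\,\alpha$. Since $d\neq 0$, $\pi^*$ is injective.
\end{itemize}
Hence $\pi^*$ is a ring isomorphism, and
\[
\CH^\bullet(\sfive)\cong\QQ[\kappa_1]/(\kappa_1^4),
\]
where $\kappa_1$ now denotes its pullback.

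\emph{Step 2: the cohomology ring.} Theorem \ref{thm: sfive has CH = R} asserts that the cohomology ring is tautological, i.e.\ $\H^\bullet(\sfive)$ is the image of $\cl$ applied to $\R^\bullet(\sfive)=\CH^\bullet(\sfive)$. So the cycle class map $\cl:\CH^\bullet(\sfive)\rarr\H^\bullet(\sfive)$ is surjective, and it suffices to prove injectivity.

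\begin{itemize}
\item $\cl$ is a ring map, and $\CH^\bullet(\sfive)$ is spanned by $1,\kappa_1,\kappa_1^2,\kappa_1^3$ in distinct degrees. So it is enough to show $\cl(\kappa_1^3)\neq0$ in $\H^6(\sfive)$: if a nonzero $\kappa_1^i$ with $i\le 3$ mapped to zero, then multiplying by $\kappa_1^{3-i}$ would force $\cl(\kappa_1^3)=0$.
\item By the compatibility of $\cl$ with $\pi^*$, we have $\cl(\pi^*\kappa_1^3)=\pi^*\cl(\kappa_1^3)$.
\item In cohomology, $\pi_*\pi^*=d$ (transfer for a finite map), so $\pi^*$ is injective on $\H^\bullet$.
\end{itemize}

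\emph{Main obstacle.} The delicate point is the nonvanishing of $\cl(\kappa_1^3)$ in $\H^6(\Mcal_5)$. This is part of the known structure of $\Mcal_5$: Izadi's result is stated for both Chow and cohomology, and the nonvanishing also follows from Looijenga's theorem, which gives a nonzero top tautological class in $\R^{g-2}(\Mcal_g)$ detected by its $\lambda_g\lambda_{g-1}$-pairing. Granting this, $\cl$ is an isomorphism, giving
\[
\CH^\bullet(\sfive)\cong\H^\bullet(\sfive)\cong\QQ[\kappa_1]/(\kappa_1^4).
\]
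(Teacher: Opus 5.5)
Your Step 1 is the paper's argument. Theorem~\ref{thm: sfive has CH = R} together with $\CH^\bullet(\Mcal_5)=\R^\bullet(\Mcal_5)$ makes $\pi^*$ surjective, finiteness of $\pi$ makes it injective, and Izadi's presentation then transfers to $\sfive$.

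The paper says nothing further about cohomology, so your Step 2 goes beyond it. Its injectivity half is correct and genuinely needed: injectivity of $\cl$ on $\CH^\bullet(\sfive)$ does reduce to $\cl(\kappa_1^3)\neq 0$ in $\H^6(\Mcal_5;\QQ)$. That nonvanishing is due to Faber, not Looijenga, whose theorem only gives $\dim\R^{g-2}(\Mcal_g)\le 1$. The argument pairs with $\lambda_5\lambda_4$, which vanishes on $\partial\overline{\Mcal}_5$. In cohomology you also need that the kernel of $\H^\bullet(\overline{\Mcal}_5)\rarr\H^\bullet(\Mcal_5)$ is the Gysin image from the normalised boundary, which follows from weights.

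The surjectivity half, however, fails if $\H^\bullet$ means the full rational cohomology, and it cannot be repaired. Chan--Galatius--Payne proved $\H^{4g-6}(\Mcal_g;\QQ)\neq 0$ for $g=3$, $g=5$ and $g\ge 7$, so $\H^{14}(\Mcal_5;\QQ)\neq 0$. Your own transfer argument shows that $\pi^*$ embeds this group into $\H^{14}(\sfive;\QQ)$. But $\CH^7(\sfive)=0$, and $\QQ[\kappa_1]/(\kappa_1^4)$ vanishes above degree $6$.

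So the reading of Theorem~\ref{thm: sfive has CH = R} that you rely on, namely $\H^\bullet(\sfive)=\cl\bigl(\R^\bullet(\sfive)\bigr)$, is false. The cohomological half of the corollary holds only for the tautological cohomology ring $\cl\bigl(\R^\bullet(\sfive)\bigr)=\cl\bigl(\CH^\bullet(\sfive)\bigr)$, i.e.\ the image of the cycle class map. With that reading, surjectivity is automatic and your injectivity argument finishes the proof, more completely than the paper, which never checks that $\cl$ is injective. You should state the cohomological conclusion in that form rather than for all of $\H^\bullet(\sfive)$.
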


Along the way towards the tautology of $\CH^\bullet(\Scal_5^-)$, we also show a rationality result for the stratum of differentials
\[
\Mcal_{5, 4}^{(2)}:= \{[C, p_1, \dots, p_4] \in \Mcal_{5, 4}\colon \Ocal_C(2p_1 + \dots + 2p_4) \cong \omega_C\},
\]
which is a $\Sfrak_4$--cover of $\Scal_5^-$.
\begin{theorem}\label{thm: rationality of diff stratum}
    The differential stratum $\Mcal_{5, 4}^{(2)}$ is a rational variety.
\end{theorem}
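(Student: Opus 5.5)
A general genus $5$ curve is non-hyperelliptic and trigonal-free. Its canonical model $C\subset\PP^4$ is then a complete intersection of three quadrics. For a point $[C,p_1,\dots,p_4]\in\diffstrat$, the divisor $2(p_1+\dots+p_4)$ is a canonical divisor. So the four points span a hyperplane $H\subset\PP^4$ that is totally tangent to $C$ at $p_1,\dots,p_4$. For general odd spin curves we have $h^0(\eta)=1$, so this hyperplane is unique.

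I would prove rationality by reversing this construction. The plan is to parametrise the data (hyperplane, four points, net of quadrics) and quotient by $\PGL_5$.

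\textbf{Step 1: normalise the frame.} Fix the hyperplane $H=\{x_4=0\}\cong\PP^3$ and four points $p_1,\dots,p_4\in H$ in general position, say the coordinate points. Together with a fifth point off $H$ they form a projective frame. The residual symmetry is therefore the subgroup $G\subset\PGL_5$ fixing $H$ and each $p_i$. This $G$ consists of matrices with a diagonal block on $x_0,\dots,x_3$ (modulo scalars) together with the column acting on $x_4$. It is a connected solvable group: a torus extended by the unipotent translations $x_i\mapsto x_i+a_ix_4$.

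\textbf{Step 2: linear conditions on the quadrics.} Consider the condition that $C=Q_1\cap Q_2\cap Q_3$ passes through each $p_i$ with tangent space contained in $H$. This means every quadric in the net vanishes at $p_i$, and the tangent spaces of the three quadrics at $p_i$, intersected, lie in $H$.

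To make this linear, I would first fix a tangent line $\ell_i\subset H$ at each $p_i$. The condition that $C$ is tangent to $\ell_i$ at $p_i$ is linear: each quadric contains $p_i$ and is tangent to $\ell_i$ there. This gives $2$ linear conditions per point on each quadric.

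Set $V$ to be the space of quadrics with
\[
\dim V = 15-8 = 7 .
\]
The net is then a point of $\mathrm{Gr}(3,V)$. One must also impose $T_{p_i}C\subset H$, which holds automatically once $T_{p_i}C=\ell_i\subset H$. So the incidence variety $X$ (nets together with lines) is an open subset of a Grassmannian bundle over $\prod_i\PP(T_{p_i}H)=(\PP^2)^4$. In particular $X$ is rational.

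Dimension check: $4\cdot2+\dim\mathrm{Gr}(3,7)=8+12=20$. Subtracting $\dim G=3+4=7$ gives $13=\dim\diffstrat=\dim\Mcal_5+1$, since $\Scal_5^-\to\Mcal_5$ is finite. This confirms the parametrisation is generically finite-to-one, and generically one-to-one modulo $G$ by the uniqueness of $H$ together with the labelling of the points.

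\textbf{Step 3: rationality of the quotient.} We have $\diffstrat$ birational to $X/G$. I would use the unipotent radical and then the torus successively. Actions of connected solvable groups on rational varieties, by Rosenlicht and Miyata-type "no-name" arguments, give rational quotients, provided the action is generically free and $X$ is birationally a $G$-equivariant vector bundle or linear representation.

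Concretely, I would first use the torus to normalise the lines $\ell_i$. Then I would use translations $x_i\mapsto x_i+a_ix_4$ to normalise part of the net. What remains should be a rational slice, for example an open subset of an affine space of coefficients.

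\textbf{Main obstacle.} The main obstacle is this last step: exhibiting an explicit slice, or verifying the hypotheses of the no-name lemma. The subtlety is that the $G$-action on the Grassmannian bundle is not a linear representation on the nose.

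To handle it, I would replace $\mathrm{Gr}(3,V)$ by the space of bases, i.e. $\mathrm{Hom}(\CC^3,V)$ injective, with an extra free $\GL_3$ action. I would then apply the no-name lemma to the free $\GL_3\times G$ action, reducing to the quotient of $(\PP^2)^4$ by the torus. That quotient is visibly rational, since the torus acts by scaling coordinates.

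Finally, I need genericity. A general such complete intersection is smooth, giving a general curve, and $H$ cuts $2\sum p_i$. This shows the map $X/G\dashrightarrow\diffstrat$ is birational and dominant onto the irreducible $\diffstrat$.
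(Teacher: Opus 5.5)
There is a genuine gap in Step 2, and the parametrisation fails at the generic point. Your overall plan matches the paper's: fix $H$ and the $p_i$, view the nets as a Grassmann bundle, then quotient by the solvable stabiliser via Miyata-type results. But Step 2 puts the Grassmann bundle over the wrong base.

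Write a quadric as $Q=q+x_4L$ with $q$ a quadric in $x_0,\dots,x_3$. Since $dx_4$ vanishes on $\ell_i\subset H$, the conditions ``$Q(p_i)=0$ and $Q$ tangent to $\ell_i$ at $p_i$'' only involve $q$. So your $7$-dimensional $V$ is $V_H\oplus x_4\cdot\H^0(\Ocal_{\PP^4}(1))$, where $V_H$ is the space of quadrics in $H\cong\PP^3$ through the $p_i$ and tangent to the $\ell_i$. For general $(\ell_1,\dots,\ell_4)$ these $8$ conditions on the $10$-dimensional space of quadrics in $\PP^3$ are independent, so $\dim V_H=2$. Then every $3$-dimensional $\Lambda\subset V$ meets $x_4\cdot\H^0(\Ocal_{\PP^4}(1))$, so the net contains a quadric $x_4L$ containing $H$. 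Moreover, $\Bs\Lambda$ contains the quartic curve $\Bs(V_H)\subset H$. Hence no net in the generic fibre of your bundle cuts out a smooth canonical curve: your claim that ``a general such complete intersection is smooth'' is false, and $X/G\dashrightarrow\diffstrat$ is not defined.

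The missing idea is that $C\cap H=2p_1+\dots+2p_4$ is the base scheme of the restricted net $\Lambda|_H$. It is therefore a complete intersection of three quadrics in $\PP^3$, and such a length-$8$ scheme imposes only $7$ conditions on quadrics. This forces $(\ell_1,\dots,\ell_4)$ into the codimension-$3$ locus of $(\PP^2)^4$ where the $4\times 6$ matrix of tangency conditions drops rank. Over that locus $\dim V=8$ and the nets form an open subset of $G(3,8)$. This is exactly the paper's $\Ebf$ (dimension $5$) with $\Fbf\to\Ebf$ (fibres of dimension $15$).

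Consequently your Step 3 reduction to ``$(\PP^2)^4$ modulo the torus, visibly rational'' does not apply. What must be proved rational is $\Ebf$ modulo $\P G'\cong(\CC^*)^3$, a surface cut out by a nonlinear determinantal condition. The paper handles it as follows:
\begin{enumerate}
\item It considers pencils inside the nets. Their base loci are elliptic quartics $E$ through the $p_i$, and the net amounts to a quadric totally tangent to $E$ along $\sum p_i$, i.e.\ $\Ocal_E(1)(-\sum p_i)$ is $2$-torsion.
\item This makes $\Ebf\gitquot\P G'$ dominated by a variety birational to $\Rcal_{1;4}$, hence unirational.
\item Castelnuovo's criterion then gives rationality of the surface.
\end{enumerate}
Only after this do the torus bundle, the Grassmann bundle and Miyata's theorem enter.

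Finally, your dimension check is wrong and could not have caught the problem anyway. In fact $\dim G=8$ (its torus is $(\CC^*)^4$, not $(\CC^*)^3$) and $\dim\diffstrat=12$, not $13$. Also, your degenerate family ($8+12$) and the correct $\Fbf$ ($5+15$) both have dimension $20$.
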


Theorem \ref{thm: rationality of diff stratum} builds on \cite{farkas2014geometry} where Farkas and Verra, using Mukai models, provide (uni)rational parametrisations for moduli spaces of odd spin curves. Moreover, they complete the Kodaira classification of $\sg^-$ and we now know that $\sg^-$ is unirational for $g \le 8$, uniruled for $g \le 11$, and of general type for $g \ge 12$. In our situation, the differential stratum $\Mcal_{5, 4}^{(2)}$ is proven to be rational after a studying the geometry of the restriction of nets of quadrics in $\PP^4$ to theta hyperplanes. This approach is manifestly dependent on the specific geometry of the general canonical curve of genus 5.

\subsection{Conventions and terminology}

Throughout we work over $\CC$. All Chow and cohomology rings are with rational coefficients. We maintain the subspace convention for Grassmannians. We denote by $X/G$ the quotient stack and by $X\gitquot G$ the GIT quotient.

\subsection{Vistoli's theorem}

This subsection contains a reminder of a result by Vistoli which will be essential to our approach to the Chow ring $\CH^\bullet(\sfive)$.
\begin{theorem}[\textup{cf. }{\cite[Theorem 2]{vistoli1987chowGroupsQutients}}]\label{thm: vistoli thm chow groups of quotients}
	Suppose $G = \GL(r)$ or $\SL(r)$ acts on a quasi-projective variety $X$ and let $\pi : X \rarr Y := X/G$ be the quotient. Denote by $\Vcal \rarr Y$ the vector bundle on $Y$ corresponding to the principal $G$--bundle $X \rarr Y$. The following hold:
	\begin{enumerate}[label=$\roman*)$]
		\item The pullback $\pi^* : \CH^\bullet(Y) \rarr \CH^\bullet(X)$ is surjective.
		\item The kernel of $\pi^*$ is generated as a group by classes of the form $c_i(\Vcal) \cap [Z]$ where $i > 0$ and $[Z] \in \CH^\bullet(Y)$.
		\item If $C$ (hence $Y$) are smooth, then $\CH^\bullet(Y)$ is generated as a ring by $c_1(\Vcal), \dots, c_r(\Vcal)$ and any chosen set of lifts of generators for $\CH^\bullet(X)$.
	\end{enumerate}
\end{theorem}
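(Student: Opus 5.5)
The plan is to reduce everything to repeated applications of the localization exact sequence and homotopy invariance of Chow groups. We treat $G = \GL(r)$ first, with $X \rarr Y$ a principal $\GL(r)$--bundle and $\Vcal = X \times^{\GL(r)} \CC^r$. The key geometric input is that $X$ is the frame bundle of $\Vcal$.

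\textbf{Step 1: factor $\pi$ through partial frame bundles.} Write $\pi$ as a tower
\[
X = X_r \rarr X_{r-1} \rarr \dots \rarr X_1 \rarr X_0 = Y,
\]
where $X_k$ parametrises $k$--tuples of linearly independent vectors in the fibres of $\Vcal$. On $X_{k-1}$ the chosen vectors span a tautological subbundle $S_{k-1} \subset \Vcal$ of rank $k-1$, and this subbundle is trivialised by the frame itself. The space $X_k$ is the open complement of $S_{k-1}$ in the total space of $\Vcal|_{X_{k-1}}$. Consider the localization sequence
\[
\CH_\bullet(S_{k-1}) \rarr \CH_\bullet(\Vcal|_{X_{k-1}}) \rarr \CH_\bullet(X_k) \rarr 0 .
\]
By homotopy invariance, both $\CH_\bullet(S_{k-1})$ and $\CH_\bullet(\Vcal|_{X_{k-1}})$ are identified with $\CH_\bullet(X_{k-1})$ via flat pullback. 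Under these identifications the first map becomes capping with the top Chern class $c_{r-k+1}(\Vcal/S_{k-1})$; this is the standard self-intersection formula for the zero section of $\Vcal/S_{k-1}$. Since $S_{k-1}$ is trivial, $c(\Vcal/S_{k-1}) = c(\Vcal)$. Hence $\CH^\bullet(X_{k-1}) \rarr \CH^\bullet(X_k)$ is surjective, with kernel $c_{r-k+1}(\Vcal) \cap \CH_\bullet(X_{k-1})$.

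\textbf{Step 2: assemble the tower.} Composing the surjections gives i). For ii), I would argue by induction on $k$ that the kernel of $\CH^\bullet(Y) \rarr \CH^\bullet(X_k)$ is spanned by classes $c_i(\Vcal) \cap [Z]$ with $i>0$. Suppose $\alpha \in \CH^\bullet(Y)$ dies on $X_k$. Its image on $X_{k-1}$ then equals $c_{r-k+1}(\Vcal) \cap \beta$ for some $\beta \in \CH_\bullet(X_{k-1})$. By surjectivity, $\beta$ lifts to some $\beta' \in \CH_\bullet(Y)$. Since Chern classes commute with pullback, $\alpha - c_{r-k+1}(\Vcal)\cap\beta'$ dies on $X_{k-1}$, and the induction hypothesis applies to it. The only real care needed is that the relevant pullbacks are flat, so that capping with Chern classes commutes with them.

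\textbf{Step 3: the case $G = \SL(r)$, and part iii).} For $\SL(r)$, the map $X/\SL(r) \rarr X/\GL(r)$ is the complement of the zero section in the determinant line bundle $\det \Vcal$. The same localization argument shows the pullback is surjective, with kernel $c_1(\det\Vcal)\cap(-) = c_1(\Vcal)\cap(-)$. Combining this with the $\GL(r)$ case gives i) and ii) for $\SL(r)$; in this case the generator $c_1(\Vcal)$ of the kernel is itself zero on $X/\SL(r)$.

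For iii) (where the hypothesis should read "$X$, hence $Y$, smooth"), intersection products are available, and ii) says $\ker\pi^*$ is the ideal generated by $c_1(\Vcal),\dots,c_r(\Vcal)$. Let $A$ be the subring generated by the $c_i(\Vcal)$ together with chosen lifts of generators of $\CH^\bullet(X)$. Take a homogeneous $\alpha$ and induct on codimension. Because $\pi^*$ is a ring map, the lifts give some $a \in A$ with $\pi^*a = \pi^*\alpha$. Then $\alpha - a = \sum_i c_i(\Vcal)\,\gamma_i$ with each $\gamma_i$ of strictly lower codimension, so each $\gamma_i$ lies in $A$ by induction, and hence $\alpha \in A$.

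\textbf{Main obstacle.} The main technical point is Step 1: the identification of the pushforward from $S_{k-1}$ as capping with the top Chern class of the quotient, compatibly with the homotopy isomorphisms. This is where I would spend care, using the excess/self-intersection formula for the zero section of a vector bundle. The remaining steps are formal.
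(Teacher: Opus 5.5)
The paper gives no proof of this statement; it is quoted from Vistoli, so your proposal has to stand on its own. For $G=\GL(r)$ it does. The following steps are all correct:
\begin{itemize}
\item the tower of partial frame bundles, with $X_k=\Vcal|_{X_{k-1}}\setminus S_{k-1}$;
\item the identification of the pushforward from $S_{k-1}$ with $c_{r-k+1}(\Vcal/S_{k-1})\cap(-)=c_{r-k+1}(\Vcal)\cap(-)$, which holds because $S_{k-1}$ is the zero scheme of a regular section of the pullback of $\Vcal/S_{k-1}$;
\item the lifting induction for ii);
\item the degree induction for iii).
\end{itemize}

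The gap is in Step 3. For $G=\SL(r)$ the hypothesis only gives an $\SL(r)$-torsor $X\rarr Y$. The space $X/\GL(r)$ you use exists only if the action extends to a free $\GL(r)$-action, and that is not assumed. So as written the $\SL(r)$ case is not proved.

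The repair is short. Let $X':=X\times^{\SL(r)}\GL(r)$ be the full frame bundle of $\Vcal$. It is a principal $\GL(r)$-bundle $\pi'\colon X'\rarr Y$ over the same $Y$, with the same associated bundle $\Vcal$. The map $\phi\colon X\times\GG_m\rarr X'$, $(x,t)\mapsto[x,\mathrm{diag}(t,1,\dots,1)]$, is an isomorphism with $\pi'\circ\phi=\pi\circ\pr_1$. Moreover $\pr_1^*\colon\CH_\bullet(X)\rarr\CH_\bullet(X\times\GG_m)$ is an isomorphism by localization, since the trivial line bundle has $c_1=0$. Hence $\ker\pi^*=\ker(\pi')^*$, and $\pi^*$ is surjective iff $(\pi')^*$ is. So i) and ii) follow from your $\GL(r)$ case, and $c_1(\Vcal)=0$ because the $\SL(r)$-structure trivialises $\det\Vcal$.

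Alternatively, stop your tower at $X_{r-1}$. The last vector is fixed by $v_1\wedge\dots\wedge v_r=\omega$ up to adding an element of $S_{r-1}$. So $X\rarr X_{r-1}$ is a torsor under a vector bundle and induces an isomorphism on Chow groups.

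A smaller point of scope: in the paper $Y=X/G$ denotes the quotient stack, and the theorem is applied to a non-free action ($\SL(5)$ acting on $\Fscr$ through $\PGL(5)$). Your argument is written for schemes. To cover that use, run it in Edidin--Graham equivariant Chow groups. Equivalently, work on $X\times U\rarr(X\times U)/G$, where $U$ is an open subset of a representation on which $G$ acts freely and whose complement has large codimension. Localization and homotopy invariance hold there, so your tower goes through unchanged.
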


\subsection{Acknowledgements}

The research was partly conducted in the framework of the DFG-funded research training group RTG 2965: From Geometry to Numbers, Project number 512730679.

\vskip 0.5em
I thank my advisor Gabi Farkas for his guidance and support throughout this work. I also thank Marian Aprodu, Riccardo Redigolo, Vlad Robu, C\u{a}lin Spiridon and Andrei Stoenic\u{a} for helpful conversations.

\section{The rationality of \texorpdfstring{$\diffstrat$}{}}\label{section: rationality of diffstrat}

In this section we are going to discuss a birational model for the differential stratum $\diffstrat$ which parametrises smooth 4--pointed genus 5 curves $[C, p_1, \dots, p_4]$ such that $\Ocal_C(2p_1 + \dots + 2p_4) \cong \omega_C$. Using this model, Theorem \ref{thm: rationality of diff stratum} will then be proven. In our situation, this is a natural moduli space to study due to its relation to $\sfive$, namely $\sfive \approx \Mcal_{5,4}^{(2)} / \Sfrak_4$. This birational isomorphism simply maps $[C, p_1 + \dots + p_4] \in \Mcal_{5, 4}^{(2)}/\Sfrak_4$ to $[C, \Ocal_C(p_1 + \dots + p_4)] \in \sfive$.

\vskip 0.5em
We start by motivating our construction. Let $[C, p_1, \dots, p_4] \in \diffstrat$ be non--hyperellipic non--trigonal so that the canonical curve $C \subset \PP^4$ is a complete intersection of 3 quadrics $Q_1, Q_2, Q_3$. The points $p_1, \dots, p_4$ span a hyperplane $H \subset \PP^4$ which is totally tangent to $C$ along the divisor $p_1 + \dots + p_4$. Restricting the net of quadrics $\Lambda := \langle Q_1, Q_2, Q_3\rangle$ to $H \cong \PP^3$, we obtain a net of quadrics in $H$ which we denote by $\Gamma$. The base locus of $\Gamma$ is a zero dimensional subscheme of $\PP^3$ such that $\supp\bigl(\Bs\,\Gamma\bigr) = p_1 + \dots + p_4$ and $\mult_{p_i} \bigl(\Bs\,\Gamma\bigr) = 2$ for $i = 1, \dots, 4$.

\vskip 0.5em
Let $t_0, \dots, t_4$ be homogeneous coordinates on $\PP^4$ and embed $\PP^3$ in $\PP^4$ as the hyperplane $H:= \{t_4 = 0\}$. The following variety is thus natural to study.
\begin{definition}\label{def: Ebf}
    Fix $p_1, \dots, p_4 \in \PP^3$ in linearly general position. We let $\Ebf \subset \GG(2, |\Ocal_{\PP^3}(2)|)$ denote the locus of nets of quadrics $\Gamma \in \GG(2, |\Ocal_{\PP^3}(2)|)$ such that $\supp(\Bs\, \Gamma) = p_1 + \dots + p_4$, $\mult_{p_i}(\Bs\,\Gamma) = 2$ for $i = 1, \dots, 4$.
\end{definition}

We define the rational map $\rho$ to be the restriction map
\[
\rho : \GG(2, |\Ocal_{\PP^4}(2)|) \dashrightarrow \GG(2, |\Ocal_{\PP^3}(2)|), \quad \Lambda \mapsto \Lambda|_H.
\]
The exact structure of the fibres of $\rho$ is readily identifiable. Let $\Gamma \in \GG(2, |\Ocal_{\PP^3}(2)|)$ and notice that $\rho^{-1}(\Gamma) \approx \GG(2, \PP V_\Gamma)$ where $V_\Gamma$ is the vector space $V_\Gamma := \Gamma \oplus t_4\cdot \H^0(\PP^4, \Ocal_{\PP^4}(1))$. Set $\CC^5 := t_4\cdot \H^0(\PP^4, \Ocal_{\PP^4}(1))$ and let $\Scal \rarr \GG(2, |\Ocal_{\PP^4}(2)|)$ be the universal subbundle. We discover that there exists a birational isomorphism
\begin{align}\label{eq: key grassmann bd struct on Grassmannian of nets of quadrics in P4}
\GG(2, |\Ocal_{\PP^4}(2)|) \approx G(3, \Scal \oplus (\CC^5\times \GG(2, |\Ocal_{\PP^3}(2)|))
\end{align}
over $\GG(2, |\Ocal_{\PP^3}(2)|)$.

\vskip 0.5em
The exact locus of indeterminacy of $\rho$ is easy to describe. The restriction of a net $\Lambda \in \GG(2, |\Ocal_{\PP^3}(2)|)$ to $H$ is undefined as a net of quadrics if and only if there exists a quadric $Q \in \Lambda$ that contains $H$, meaning $\Lambda \cap t_4 \cdot \H^0(\PP^4, \Ocal_{\PP^4}(1)) \neq 0$. Therefore, the exact fibre of $\rho$ over $\Gamma$ is
\begin{align}\label{eq: fibre of rho is complement of Schubert cycle}
\rho^{-1}(\Gamma) = \{\Lambda \in G(3, V_\Gamma)\colon \Lambda \cap t_4\cdot \H^0(\PP^4, \Ocal_{\PP^4}(1)) = 0\}.
\end{align}
The following object is thus natural to investigate.

\begin{definition}\label{def: Fbf}
    We define the subvariety $\Fbf \subset \GG(2, \Ocal_{\PP^4}(2))$ to be the preimage of $\Ebf$ under the rational map $\rho$, inside the domain of $\rho$.
\end{definition}

Recall the fixed points $p_1, \dots, p_4 \in H \subset \PP^4$ in linearly general position, and let $\P G\subset \PGL(5)$ denote the stabiliser subgroup these points. In suitable coordinates,
\[
\P G \cong \left\{
\begin{bmatrix}
    * & 0 & 0 & 0 & *\\
    0 & * & 0 & 0 & * \\
    0 & 0 & * & 0 & * \\
    0 & 0 & 0 & * & * \\
    0 & 0 & 0 & 0 & *
\end{bmatrix}
\right\} \subset \PGL(5).
\]
Thus, we have arrived at the following birational model for $\Mcal_{5, 4}^{(2)}$.
\begin{proposition}\label{prop: birational model for differential stratum}
    The map $\Fbf \gitquot \P G \dashrightarrow \diffstrat$, $\Lambda \mapsto [\Bs(\Lambda), p_1, \dots, p_4]$, is a birational isomorphism onto the locus of non--hyperelliptic, non--trigonal curves.
\end{proposition}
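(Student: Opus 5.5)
We construct an explicit inverse to the map of Proposition \ref{prop: birational model for differential stratum} on the locus of non--hyperelliptic, non--trigonal curves. The proof has three steps: well-definedness, construction of the inverse, and the stabiliser count.

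\textbf{Step 1: the forward map is well defined.} Take $\Lambda \in \Fbf$ general. We check that $C := \Bs(\Lambda)$ is a smooth complete intersection of three quadrics in $\PP^4$. By adjunction $C$ is then a canonical curve of genus $5$, so $\Ocal_C(1) \cong \omega_C$.
\begin{itemize}
\item Smoothness is an open condition on $\Fbf$. It therefore suffices to show that $\Fbf$ is irreducible, or that the relevant component is, and to exhibit one smooth member. Such a member comes from the converse construction in Step 2 applied to any smooth $[C,p_1,\dots,p_4]$ in the stratum.
\item Since $\rho(\Lambda) \in \Ebf$, we have $C \cap H = \Bs(\Lambda|_H) = 2p_1 + \dots + 2p_4$ as a scheme.
\item Hence $\Ocal_C(2p_1+\dots+2p_4) \cong \Ocal_C(1) \cong \omega_C$, so $[C,p_1,\dots,p_4] \in \diffstrat$.
\end{itemize}
For the descent to the quotient, note that the points $p_i$ and $H$ are fixed by $\P G$, so the assignment is $\P G$--invariant and descends to $\Fbf \gitquot \P G$, at least on the open set of stable points.

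\textbf{Step 2: the inverse.} Start from a non--hyperelliptic, non--trigonal $[C,p_1,\dots,p_4] \in \diffstrat$.
\begin{itemize}
\item The canonical model $C \subset \PP^4 = \PP(\H^0(\omega_C)^\vee)$ is cut out by a net of quadrics $\Lambda_C$, by Petri's theorem.
\item The divisor $2\sum p_i$ is a canonical divisor, so it is a hyperplane section $H' \cap C$. Moreover $p_1,\dots,p_4$ span $H'$: otherwise they would lie on a plane, giving $h^0(\omega_C(-\sum p_i)) \ge 2$, i.e.\ $h^0(\sum p_i)\ge 2$. But $\sum p_i$ is an odd theta characteristic of degree $4$, and a $g^1_4$ of this kind should be excluded for general points of the stratum. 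Where it is not excluded, we restrict to the open dense locus where $h^0(\eta)=1$.
\item The points $p_i$ are in linearly general position in $H'$. If three of them were collinear, the line would meet $C$ in $3$ points, contradicting Petri's theorem, since $C$ is then an intersection of quadrics.
\item Choose $g \in \PGL(5)$ sending $H'$ to $H$ and each $p_i$ to the fixed point $p_i$. This $g$ is unique up to $\P G$.
\item The net $g\cdot\Lambda_C$ restricted to $H$ has base scheme $C\cap H = 2\sum p_i$, so its restriction lies in $\Ebf$. Hence $g\cdot \Lambda_C \in \Fbf$, and no quadric of $g\cdot\Lambda_C$ contains $H$, since $C$ is nondegenerate.
\end{itemize}
This gives a map $\diffstrat^{\circ} \to \Fbf/\P G$, and the two constructions are mutually inverse by construction.

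\textbf{Step 3: the stabiliser count (main obstacle).} The crux is passing from the stack or set-theoretic quotient to $\Fbf\gitquot \P G$. We need the general point of $\Fbf$ to be $\P G$--stable with finite stabiliser. A stabiliser of $\Lambda$ would act as automorphisms of $(C,p_1,\dots,p_4)$, and these are finite for a smooth genus $5$ curve. Stability itself we would check via Hilbert--Mumford on a general member, or we would sidestep it by working with a $\P G$--invariant open set admitting a geometric quotient, which Rosenlicht guarantees.

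As a consistency check, we compare dimensions, using $\dim \diffstrat = 3g-3 = 12$ (the odd spin moduli $\sfive$ has the same dimension as $\Mcal_5$):
\[
\dim \Fbf = \dim \P G + 12 = 8 + 12 = 20.
\]
This is compatible with the fibration of $\Fbf$ over $\Ebf$ with open Grassmannian fibres $G(3,V_\Gamma)$. These have dimension $3\cdot 12 - 9 = \ldots$; more precisely the open subset of $G(3,15)$ has dimension $36$. The fibre dimension does not equal $20$ on its own, so we would recompute $\dim \Ebf$ carefully to reconcile the count; this bookkeeping is where we expect errors to surface.

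With Steps 1--3 in place, birationality onto the non--hyperelliptic, non--trigonal locus follows.
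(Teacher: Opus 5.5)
Your Steps 1 and 2 are the paper's own argument, spelled out more fully. The paper gives only the motivating paragraph at the start of the section on the rationality of $\diffstrat$: the canonical model is cut out by a net, the $p_i$ span the totally tangent hyperplane, and restriction lands in $\Ebf$. You additionally make explicit the inverse and the fact that the choices of $g$ form a $\P G$--coset, and that part is sound.

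Two corrections to Step 2 and the quotient. First, no restriction to a locus with $h^0(\eta)=1$ is needed. For odd $\eta$ on a non--hyperelliptic curve, $h^0(\eta)\ge 2$ forces $h^0(\eta)\ge 3$, which Clifford forbids in degree $4$, so the $p_i$ always span $H'$. Oddness is exactly what is being used here. The literal locus $\{\Ocal_C(2p_1+\dots+2p_4)\cong\omega_C\}$ also has a $12$--dimensional even component (vanishing theta--null, $h^0(\eta)=2$, the $p_i$ coplanar) that $\Fbf$ never reaches. So the statement concerns the odd component, as the paper intends, and your ``open dense locus where $h^0(\eta)=1$'' would not be dense in the literal stratum. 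Second, $\P G\cong\CC^4\rtimes(\CC^*)^4$ is not reductive, so Hilbert--Mumford is not available. Your Rosenlicht alternative is the right way to read $\Fbf\gitquot\P G$.

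The genuine problems are the ones you leave open. First, your dimension check misidentifies the fibre. The fibre of $\rho|_{\Fbf}\colon\Fbf\to\Ebf$ is the open subset of $G(3,V_\Gamma)$ with $\dim V_\Gamma=3+5=8$, hence $15$--dimensional; $36$ is $\dim\GG(2,|\Ocal_{\PP^4}(2)|)$ itself. Since $\Xscr\to\Ebf$ is a $\PP^2$--bundle and $\dim\Xscr=\dim\Rcal_{1;4}+\dim\P G'=4+3=7$, one gets $\dim\Ebf=5$ and $\dim\Fbf=5+15=20=8+12$. So the count does reconcile.

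Second, and more substantively, birationality requires $\Fbf$ to be irreducible; otherwise $\Fbf\gitquot\P G$ could have extra components on which every base curve is singular. Step 1 only reduces to this without proving it. As $\Fbf$ is open in a Grassmann bundle over $\Ebf$, it suffices that $\Ebf$ is irreducible, and this follows from the incidence $\Xscr$:
\begin{enumerate}[label=$\roman*)$]
\item The open part of $\Xscr$ consisting of pairs $(P,\Gamma)$ with $\Bs\,P$ smooth is the image of a $\P G'$--torsor over the irreducible $\Rcal_{1;4}$.
\item This open part surjects onto $\Ebf$, because every $\Gamma\in\Ebf$ contains such a pencil by the discriminant argument of the section on Segre--general subpencils.
\item That argument applies to every $\Gamma\in\Ebf$, since its inputs hold there: $\Bs\,\Gamma$ is finite, and no two independent quadrics of $\Gamma$ are singular at a common $p_i$ (otherwise $\Bs\,\Gamma$ would have length at least $4$ at $p_i$).
\end{enumerate}
The paper leaves this step implicit as well, but your proof needs it.
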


Building on Proposition \ref{prop: birational model for differential stratum}, we now go on to tackle the problem of the rationality of the differential stratum $\diffstrat$.

\begin{proof}[Proof of Theorem \ref{thm: rationality of diff stratum}]
    We first claim that $\Fbf$ is a rational variety. As it follows from (\ref{eq: key grassmann bd struct on Grassmannian of nets of quadrics in P4}), the restriction $\rho|_\Fbf : \Fbf \rarr \Ebf$ is a Grassmann bundle so the rationality of $\Fbf$ reduces to the rationality of $\Ebf$. This latter space is rational for the following reason.

    \vskip 0.5em
    Let $\P G ' \subset \PGL(4) = \Aut(H)$ be the stabiliser group of the points $p_1, \dots, p_4$ inside the automorphism group of the hyperplane $H \cong \PP^3$. This group identifies with a diagonal subgroup in $\PGL(4)$, $\P G' \cong (\CC^*)^3$. Define $\Xscr$ to be the following incidence variety:
    \begin{align*}
    \begin{tikzcd}[ampersand replacement=\&]
    	\& {\Xscr := \{(P, \Gamma) \in \GG\bigl(1, \bigl|\Ical_{\{p_i\}_{i=1}^4}(2)\bigr|\bigl) \times \Ebf\colon P \subset \Gamma\}} \\
    	{\GG\bigl(1, \bigl|\Ical_{\{p_i\}_{i=1}^4}(2)\bigl|\bigr)} \&\& \Ebf,
    	\arrow["{\pi_1}"', from=1-2, to=2-1]
    	\arrow["{\pi_2}", from=1-2, to=2-3]
    \end{tikzcd}
    \end{align*}
    where $\pi_1 : \Xscr \rarr \GG(1, |\Ocal_{\PP^3}(2)|)$ and $\pi_2 : \Xscr \rarr \Ebf$ are the projections onto the first and second factors respectively. 

    \vskip 0.5em
    We analyse $\Xscr$ more closely. Let $P \in \GG(1, |\Ocal_{\PP^3}(2)|)$ be a general pencil of quadrics and set $E := \Bs \, P$ for the base locus of $P$. This is an elliptic curve of degree 4 in $\PP^3$. To give a net of quadrics $\Gamma \in \Ebf$ such that $P \subset \Gamma$ is equivalent to giving a quadric totally tangent to $E$. This shows that $\pi_1$ is an isomorphism onto the locus of pencils of quadrics $\Gamma$ for which, on the base locus $E := \Bs(\Gamma)$, the line bundle $\Ocal_E(1)(-p_1 - \dots - p_4)$ has order 2 in $\Pic^0(E)$. This analysis implies that there exists a birational isomorphism
    \begin{align}
    \Xscr \gitquot \P G' \approx \Rcal_{1;4}
    \end{align}
    where $\Rcal_{1;m}$ is the moduli space
    \[
    \Rcal_{1;m}:= \{[E, p_1, \dots, p_m, \eta]\colon [E, p_1, \dots, p_m] \in \Mcal_{1, m} \text{ and } \eta \in \Pic^0(E)[2]\text{ non--trivial}\}.
    \]
    The space $\Rcal_{1;4}$ is known to be (uni)rational so that, taking into account the induced projection $\Xscr\gitquot\P G' \rarr \Ebf\gitquot \P G'$, we deduce that $\Ebf \gitquot \P G'$ is unirational as well. 

    \vskip 0.5em
    We now point out that $\pi_2 : \Xscr \rarr \Ebf$ is a $\PP^2$--bundle, implying that
    \[
    \dim \Ebf\gitquot\P G' = 2,
    \]
    that is, $\Ebf / \P G'$ is a unirational surface. It is now a consequence of Castelnuovo's Rationality Criterion that $\Ebf \gitquot \P G'$ is in fact rational. Since $\Ebf \rarr \Ebf \gitquot \P G'$ is a principal $\P G'$--bundle in the \'etale topology and $\P G' \cong (\CC^*)^3$ is a simple group, this is a principal bundle in the Zariski topology. This proves the rationality of $\Ebf$, implying the rationality of $\Fbf$. By Myiata's Theorem from \cite{miyata1971InvariantsCertainGroups} on the rationality of triangular representations, the rationality of $\Fbf \gitquot \P G$ follows. Coupled with Proposition \ref{prop: birational model for differential stratum}, this finishes the proof of the rationality of $\Mcal_{5, 4}^{(2)}$.
\end{proof}

\section{Nets of quadrics in \texorpdfstring{$\PP^3$}{} and Segre--general subpencils}

Motivated by the birational model discussed in Section \ref{section: rationality of diffstrat} and for later use as well, the problem of this subsection is to exclude the possibility of obtaining by restriction nets of quadrics in $\PP^3$ with the following highly degenerate property: \textit{every subpencil of the net has singular base locus.} Let $[C, \eta] \in \Scal_5^-$ be a non--hyperelliptic non--trigonal odd spin curve of genus 5 with $H \subset \PP^3$ the hyperplane giving rise $\eta$ on $C$. Write $\eta = \Ocal_C(p_1 + \dots + p_4)$ for not necessarily distinct points $p_1, \dots, p_4\in C$. Let $\Lambda \in \GG(2, |\Ocal_{\PP^4}(2)|)$ be the net of quadrics whose complete intersections is $C$ and $\Lambda_H:= \Lambda|_H$ be the restriction to $H$.

\vskip 0.5em
Segre's classification of pencils of quadrics tells us in particular that if $P$ has Segre symbol $[1, 1, 1, 1]$ (meaning that the discriminant polynomial $\disc\, P$ has no multiple roots), the base locus $\Bs\, P$ is smooth. Therefore, we must discard two possibilities according to the discriminant $\Delta := \disc\, \Lambda_H$:
\begin{enumerate}[label=$\roman*)$]
    \item $\Delta = |\Lambda_H|$, i.e. every quadric in $\Lambda_H$ is singular. As it turns out, this is equivalent to the net $\Lambda_H$ failing to be semistable (\cite[p. 233]{wall1978netsQuadricsTheta}).
    \item $\Delta$ has multiple components.
\end{enumerate}
In discarding these two cases, the following lemma will shortly prove its usefulness.
\begin{lemma}\label{lemma: uniqueness of quadcs singular at given point in Gamma net of quadrics in P3}
    No two quadrics in $\Lambda_H$ share a common singularity amongst the points $p_i$.
\end{lemma}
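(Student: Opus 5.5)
Suppose, towards a contradiction, that two distinct quadrics $\bar Q, \bar Q' \in \Lambda_H$ are both singular at the same point $p := p_i$. I want to build a pencil in $\Lambda$ whose members are all singular at $p$ in a way that forces $C$ itself to be singular at $p$. Since $C \subset \PP^4$ is a smooth complete intersection of the quadrics of $\Lambda$, this is impossible.

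The two quadrics restricted to $H$ span a pencil $\langle \bar Q, \bar Q'\rangle \subset \Lambda_H$. Every member of this pencil is singular at $p$, because the condition of being singular at $p$ is linear in the quadric. Lift the pencil to $\Lambda$. Restriction $\Lambda \rarr \Lambda_H$ is an isomorphism, since $C$ is nondegenerate and so no quadric of $\Lambda$ contains $H$. The lift is therefore a pencil $P = \langle Q, Q'\rangle \subset \Lambda$, and each member $Q_\lambda$ of $P$ satisfies: $Q_\lambda|_H$ is singular at $p$.

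Now I translate this into a statement about tangent hyperplanes in $\PP^4$. For a quadric $Q \subset \PP^4$ passing through $p \in H$, the restriction $Q|_H$ is singular at $p$ exactly when either $Q$ is singular at $p$, or $Q$ is smooth at $p$ with $T_pQ = H$. Because $P$ is a pencil, the partial derivatives at $p$ give a linear map from $P$ to $\CC^5$. The condition above says its image lies in the line spanned by the linear form $t_4$. The kernel of this map is therefore nonzero, so some nonzero $Q_0 \in P$ is singular at $p$.

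Finally, complete $P$ to $\Lambda$ by adding a third quadric $Q''$. Then $T_pC = \bigcap_{Q \in \Lambda} T_pQ$, and since $Q_0$ imposes no condition, this is cut out by at most two linear forms in $\PP^4$. So $\dim T_pC \ge 2$, contradicting smoothness of $C$ at $p$ because $C$ is a curve. Alternatively, if two independent members of $P$ both had tangent hyperplane $H$ at $p$, some combination of them would again be singular at $p$, and the same contradiction follows.

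The one point needing care is the translation in the third paragraph: the precise form of "singular at $p$ after restriction to $H$", and the fact that it gives a linear condition whose image is one-dimensional. This does not require $p_1, \dots, p_4$ to be distinct; it needs only $p \in C \cap H$ and the smoothness of $C$. I expect this linear-algebra step to be the main, though modest, obstacle. I would present it explicitly: write each $Q_\lambda = \bar Q_\lambda + t_4 \ell_\lambda$, so that the gradient at $p$ is $\ell_\lambda(p)\, dt_4$, and the map $\lambda \mapsto \ell_\lambda(p)$ from the two-dimensional space $P$ to $\CC$ has a kernel.
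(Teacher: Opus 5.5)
Your proposal is correct and follows essentially the same route as the paper: you lift the pencil of $\Lambda_H$ singular at $p_i$ to $\Lambda$, write each member as $\bar Q + t_4\ell$, and use the single linear condition $\ell(p_i)=0$ on the two-dimensional lift to produce a nonzero $Q_0\in\Lambda$ singular at $p_i$, which forces $C=\Bs\,\Lambda$ to be singular there. Your gradient computation $\nabla Q_\lambda(p)=\ell_\lambda(p)\,dt_4$ simply makes explicit the step the paper leaves as ``one checks easily''.
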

\begin{proof}
    Suppose there exist quadrics $q_i, q_i' \in \Lambda_H$ linearly independent both singular at $p_i$, and let $P := \langle q_i, q_i' \rangle \subset \Lambda_H$ be the pencil they generate. Let $\widetilde P \subset \Lambda$ be any lift of the pencil $P$, and consider the vector space
    \[
    V_{p_i}:= \{\ell(p_i) = 0\} \subset \Lambda_H \oplus t_4 \cdot \H^0(\PP^4, \Ocal_{\PP^4}(1)).
    \]
    This is the hyperplane of all quadrics in $Q \in \Lambda_H \oplus t_4 \cdot \H^0(\PP^4, \Ocal_{\PP^4}(1))$ which, if written $Q = q + t_4 \cdot \ell$ for $q \in \Lambda_H$ and $\ell \in \H^0(\PP^4, \Ocal_{\PP^4}(1))$, $\ell(p_i) = 0$ holds. Let $Q_0 \in \widetilde P \cap V_{p_i}$ be non--zero (which exists for dimension reasons). One checks easily that $Q_0$ is singular at $p_i$ so that we have found a quadric $Q_0 \in \Lambda$ which is singular at $p_i \in \Bs\, \Lambda$. This implies that $C = \Bs\, \Lambda$ is singular at $p_i$, a contradiction.
\end{proof}

We now turn our attention back to cases $i)$ and $ii)$. Possibility $i)$ is easy to discard if one recalls Wall's classification of non--semistable nets of quadrics in $\PP^3$ as shown in (\ref{eq: non--semistable nets of quadrics in P3}).
\begin{align}\label{eq: non--semistable nets of quadrics in P3}
\begin{pmatrix}
    0 & 0 & 0 & 0 \\
    0 & * & * & * \\
    0 & * & * & * \\
    0 & * & * & *
\end{pmatrix} \quad \text{or} \quad
\begin{pmatrix}
    0 & 0 & 0 & * \\
    0 & 0 & 0 & * \\
    0 & 0 & * & * \\
    * & * & * & *
\end{pmatrix}.
\end{align}
In the second matrix, the base locus contains a line and this cannot happen since $\dim \Bs(\Lambda_H) = 0$. As for the first matrix, $[1:0:0:0] \in \Bs\, \Lambda_H$ is a singularity shared by all of the quadrics in the net, contradicting Lemma \ref{lemma: existence of pencils of quadrics with non--singular locus}.

\vskip 0.5em
We can now assume that $\Delta \neq |\Lambda_H|$, and we want to analyse what happens when $\Delta$ is non--reduced. The following lemma follows from \cite[Theorem 1.6]{wall1978netsQuadricsTheta}
\begin{lemma}
    Let $\Gamma \in \GG(2, |\Ocal_{\PP^3}(2)|)$ be a net of quadrics. If $\Gamma$ is stable, the discriminant $\Delta$ has a multiple component if and only if $\dim (\Bs\,\Gamma) > 0$.
\end{lemma}

\vskip 0.5em
We are thus left to deal with unstable nets of quadrics in $\PP^3$ which are semistable. Namely, we have the following possibilities (cf. \cite[Theorem 0.1]{wall1978netsQuadricsTheta}) for all matrices in the given net:
\[
\begin{pmatrix}
    0 & 0 & 0 & * \\
    0 & * & * & * \\
    0 & * & * & * \\
    * & * & * & *
\end{pmatrix} \quad \text{or} \quad
\begin{pmatrix}
    0 & 0 & * & * \\
    0 & 0 & * & * \\
    * & * & * & * \\
    * & * & * & *
\end{pmatrix}.
\]
Notice again that the second type of matrix is not possible by dimension reasons as the base locus of such a net contains a line.

\vskip 0.5em
We now take a closer look at the first type. From Wall's classification \cite[1.6]{wall1978netsQuadricsTheta} of semistable nonstable quadrics, only cases (i) and (ii) are worth considering as otherwise the base locus contains a component of dimension $>0$. Take for example (i). This is the scenario where all quadrics are tangent to a fixed plane $\pi \subset \PP^3$ at a point $x_1 \in \pi$. Let now $P$ be any pencil in such a net. Since $x_1 \in \Bs\, P$ is a singularity of $\Bs\, P$ (the quadrics share a common tangent plane there), there exists a quadric $q \in P$ singular at $x_1$. Picking a different pencil $P'$ in the net which avoids $q$, we find another quadric $q' \in P'$ which is also singular at $x_1$, contradicting Lemma \ref{lemma: existence of pencils of quadrics with non--singular locus}. Case (ii) is dealt with by a similar token.

\vskip 0.5em
The discussion in this subsection has the following useful consequence.
\begin{lemma}\label{lemma: existence of pencils of quadrics with non--singular locus}
    Let $[C, \eta] \in \sfive$, $\eta = \Ocal_C(D)$, be non--hyperelliptic, non--trigonal and view $C$ canonically embedded in $\PP^4$. Let $H \in (\PP^4)^\vee$ be such that $C \cdot H = 2D$. The discriminant $\Delta := \disc \, \Lambda|_H$ is not the entire net $|\Lambda|$, nor does it have multiple components. In particular, the general pencil $P \subset |\Lambda|$ has base locus a smooth elliptic curve.
\end{lemma}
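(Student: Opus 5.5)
The statement collects the case analysis carried out just above, so the plan is to organise that discussion into a proof. Throughout, the key input is Lemma \ref{lemma: uniqueness of quadcs singular at given point in Gamma net of quadrics in P3}: no two independent quadrics of $\Lambda_H$ are singular at the same point $p_i$. I also need a slight extension of it. If all quadrics of a pencil in $\Lambda_H$ are singular, or share a tangent plane, at a point $x$ of $\Bs\,\Lambda_H$, then $x$ is one of the $p_i$. This holds because $\supp(\Bs\,\Lambda_H)=\supp(C\cap H)$. With this, the lemma's hypothesis is exactly what lets any such coincidence be converted into a singular point of $C$.

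First I exclude $\Delta=|\Lambda_H|$. By Wall \cite[p.~233]{wall1978netsQuadricsTheta}, this happens exactly when $\Lambda_H$ is not semistable, so the net falls into one of the two normal forms of (\ref{eq: non--semistable nets of quadrics in P3}).
\begin{itemize}[label=--]
\item In the second form the base locus contains a line. This contradicts $\dim\Bs(\Lambda_H)=0$, which holds since $\Bs(\Lambda_H)=C\cap H$ is finite.
\item In the first form every quadric is singular at the base point $[1:0:0:0]$. That point is then some $p_i$, and any two independent quadrics of the net contradict Lemma \ref{lemma: uniqueness of quadcs singular at given point in Gamma net of quadrics in P3}.
\end{itemize}

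Next, assume $\Lambda_H$ is semistable and exclude multiple components of $\Delta$. If $\Lambda_H$ is stable, Wall's Theorem 1.6, in the form of the unnamed lemma above, says that a multiple component forces $\dim\Bs\,\Lambda_H>0$, which is impossible. If $\Lambda_H$ is strictly semistable, Wall's list gives two normal forms.
\begin{itemize}[label=--]
\item The form with a $2\times 2$ zero block again has a line in its base locus, so it is impossible.
\item For the remaining form, Wall's refined list \cite[1.6]{wall1978netsQuadricsTheta} leaves only cases (i) and (ii) with finite base locus. In case (i) all quadrics are tangent to a fixed plane at a point $x_1$, which is then a singular point of $\Bs\,P$ for every pencil $P$. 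Segre's classification then yields a quadric in $P$ singular at $x_1$. Taking two pencils meeting in a single quadric that is not singular there, I get two independent quadrics singular at $x_1=p_i$, contradicting the uniqueness lemma. Case (ii) goes the same way, using the analogous common tangency data.
\end{itemize}
This step is where I expect the main obstacle. I must check, directly from the normal forms, that in every such case the shared-tangency point forces a singular member in each pencil, and that the two singular quadrics produced are genuinely independent.

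Finally, the consequence about pencils. A general line in the net $|\Lambda_H|$ meets the plane quartic $\Delta$ transversally, because $\Delta$ is a reduced proper curve. So the corresponding pencil has reduced discriminant, that is, Segre symbol $[1,1,1,1]$. By Segre's classification its base locus is then a smooth complete intersection of two quadrics in $\PP^3$, which is an elliptic quartic curve.
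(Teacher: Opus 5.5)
Your proposal is correct and follows the paper's own argument step by step. Both use Wall's trichotomy (non-semistable, strictly semistable, stable), exclude each degenerate case either by finiteness of $\Bs\,\Lambda_H$ or by Lemma~\ref{lemma: uniqueness of quadcs singular at given point in Gamma net of quadrics in P3}, and finish with a general line meeting the reduced quartic $\Delta$ transversally. The step you flag as the main obstacle follows immediately from the normal form, with no need for Wall's refined cases (i)/(ii) or for Segre: if every member $A$ of the net has $a_{11}=a_{12}=a_{13}=0$, then $e_1\in\Bs\,\Lambda_H$ and $Ae_1=a_{14}e_4$, so the functional $A\mapsto a_{14}$ has a kernel of dimension at least $2$ in the net, giving two independent quadrics singular at $e_1$, which is some $p_i$, contradicting the uniqueness lemma.
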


\section{The Brill--Noether general locus}\label{section: BN general locus}

\begin{definition}
    Let $\sfivetilde \subset \sfive$ denote the locus of non--hyperelliptic, non--trigonal curves with reduced theta characteristic.
\end{definition}

The main object of interest in this section is the open subset $\widetilde \Scal_5^-$ in $\Scal_5^-$. Using a variant of the model described in Section \ref{section: rationality of diffstrat}, we prove the following proposition.
\begin{proposition}\label{prop: chow ring of sfivetilde is tautological}
    The Chow and cohomology rings of $\sfivetilde$ is generated by tautological classes.
\end{proposition}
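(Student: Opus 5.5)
We will present $\sfivetilde$ as a quotient stack of an open subset of a homogeneous space (a vector bundle or Grassmann bundle over a simple base) by a group of type $\GL$/$\SL$ or a solvable group. Then Theorem \ref{thm: vistoli thm chow groups of quotients} reduces the computation to Chern classes of the natural bundle, which we identify with tautological classes.

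Since the theta characteristic is reduced, $\eta=\Ocal_C(p_1+\dots+p_4)$ has $h^0=1$ and distinct $p_i$, so the theta hyperplane $H$ is unique. Rather than fixing the points as in Section \ref{section: rationality of diffstrat}, which introduces the $\Sfrak_4$ cover, we fix only the hyperplane $H=\{t_4=0\}$. We then consider the open locus $\Ucal\subset\GG(2,|\Ocal_{\PP^4}(2)|)$ of nets $\Lambda$ with three properties: $\Bs\Lambda$ is a smooth curve, $H$ is totally tangent to it, and the tangency points are distinct. Then $\sfivetilde\cong \Ucal/P_H$, where $P_H\subset\PGL(5)$ is the parabolic stabiliser of $H$; equivalently we take $\GL(5)$-type stabilisers and lift. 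The Levi part is $\GL(4)\times\GG_m$ and the unipotent radical is $\CC^4$, which does not affect rational Chow rings. So $\CH^\bullet(\sfivetilde)$ is generated by the Chern classes of the rank-4 and rank-1 bundles, namely the restriction of the Hodge bundle and its quotient $\H^0(\eta^{\otimes 2})$ modulo the section, together with lifts of generators of $\CH^\bullet(\Ucal)$.

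To handle $\CH^\bullet(\Ucal)$, we use the fibration $\rho$ from (\ref{eq: key grassmann bd struct on Grassmannian of nets of quadrics in P4}). Over a net $\Gamma$ in $H\cong\PP^3$, the fibre is an open subset (\ref{eq: fibre of rho is complement of Schubert cycle}) of a Grassmannian. By projective-bundle and Grassmann-bundle formulas, together with excision surjectivity, this reduces the problem to the base. The base is the locus of nets of quadrics in $\PP^3$ whose base scheme is four distinct double points. By Lemma \ref{lemma: existence of pencils of quadrics with non--singular locus} it may be restricted to nets whose general subpencil has smooth elliptic base locus. We then parametrise this base by the incidence $\Xscr$ used earlier, now without fixing the points: unordered 4-tuples of points together with nets. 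Its Chow ring is generated by classes pulled back from the configuration-space quotient, and since $\Ucal$ is open in an affine-linear or Grassmann fibration over it, all of these classes reduce to Chern classes of $\GL$-equivariant bundles.

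It then remains to identify these equivariant Chern classes with tautological classes. The universal bundles $\H^0(\omega_C)$, $\H^0(\Ocal_C(H))/\langle s\rangle$, and the net $\Lambda=\ker(\Sym^2\H^0(\omega_C)\to\H^0(\omega_C^2))$ are all built from the Hodge bundle $\Ebf$ and $\pi_*\omega^{\otimes 2}$. By Grothendieck–Riemann–Roch their Chern classes are polynomials in $\lambda_i$ and $\kappa_i$. The one non-obvious class is $c_1$ of the line bundle $\H^0(\eta)$, whose square is $\det$ of the pushforward of the theta divisor data. We plan to express it via the relation $2c_1(\H^0(\eta))=c_1$ of the line spanned by the equation of $H$ in $\H^0(\omega)$, and then use GRR for the spin bundle to show it is a multiple of $\lambda_1$ rationally. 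The cohomological statement follows because every step above (Vistoli's theorem, the bundle formulas, excision) has a cohomological analogue, with excision surjective on the relevant Borel–Moore groups.

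The main obstacle is the third step, namely showing that $\CH^\bullet$ of the base locus of nets in $\PP^3$ with prescribed double-point base scheme is generated by equivariant Chern classes. Unlike the rationality argument, birationality is not enough here: we need an honest stratification or a vector-bundle description over the open set. We would first try to realise $\Ebf$ (with points unfixed) as an open subset of a Grassmann bundle over the configuration space of four points in general position, which is a homogeneous $\PGL(4)$-space. The fibres would be defined by the linear conditions of containing the tangent data. If that fails, we would fall back on a stratification by the degeneracy of $\Delta$ afforded by Section 3.
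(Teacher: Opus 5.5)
\paragraph{The gap: the base of nets in $H$.} Your first two reductions are sound and run parallel to the paper. Fixing the theta hyperplane $H$ and quotienting by its parabolic stabiliser is equivalent to the paper's use of $\Fscr/\SL(5)$: the unipotent radical is harmless, and the Levi is $\GL(4)$ once you work in $\SL(5)$ and accept a $\mu_5$-gerbe. The open Grassmann-bundle structure over nets in $H$ is the paper's map $\varrho\colon\Fscr\rarr\Bscr$.

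The gap is exactly the step you flag as the main obstacle, and the route you propose for it fails. You need the Chow ring of the locus of nets in $H\cong\PP^3$ whose base scheme is four distinct double points. You hope to realise this locus as an open subset of a Grassmann bundle over the space of points and tangent data.

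Fix the points $p_i$ and tangent directions $v_i$. The length-$2$ schemes $\xi_i=(p_i,v_i)$ then impose $8$ conditions on the $10$-dimensional space of quadrics. For general $v_i$ these conditions are independent, so only a pencil contains them. A net exists only when the $\xi_i$ fail to impose independent conditions, as they must, since a complete intersection of three quadrics imposes only $7$ conditions on quadrics. Taking the $p_i$ to be the coordinate points, this says a $4\times 6$ matrix built from the coordinates of the $v_i$ has rank at most $3$. That is a determinantal locus of codimension $3$ in $(\PP^2)^4$, consistent with $\dim\Ebf=5$.

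So $h^0(\Ical_{\xi}(2))$ jumps, and there is no Grassmann bundle over a homogeneous base. You are left with a determinantal variety whose Chow ring you have no control over. The fallback stratification by the discriminant does not compute any Chow groups either.

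\paragraph{The missing idea.} The paper handles this base as follows.
\begin{enumerate}
\item Pass to the incidence $\Ascr$ of pairs $P\subset\Gamma$, a pencil inside the net. This is a $\PP^2$-bundle over the base.
\item Lemma~\ref{lemma: existence of pencils of quadrics with non--singular locus} ensures that the pairs with $E=\Bs\,P$ a smooth elliptic quartic still surject onto the base.
\item A net through $P$ with four double base points is the same as a quadric totally tangent to $E$. Equivalently, it is four points of $E$ with $\Ocal_E(1)(-p_1-\dots-p_4)$ a non-trivial $2$-torsion point.
\item Hence $\Ascr/\PGL(5)$ is birational to $\Rcal_{1;[4]}$. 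The decisive input is the external result \cite{krug2012thesisSpinPrym} that $\CH^\bullet(\Rcal_{1;4})$ is generated by $\psi$-classes.
\end{enumerate}

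\paragraph{A further omission, and what is fine.} Once this is in place, the generators are symmetric functions of the $\psi_i$ at the tangency points. These are not among the bundles you list in your last step, so you must still show they are tautological on $\sfive$. One way is to push forward $0\rarr\Ocal\rarr\eta\rarr\eta|_D\rarr 0$ and use $\eta^{\otimes 2}\cong\omega$. Your GRR argument for $c_1(f_*\eta)$ is fine: since $f_!\eta=f_*\eta-(f_*\eta)^\vee$ in $K$-theory, it gives $2c_1(f_*\eta)$ as a multiple of $\kappa_1$.
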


Before going into the proof, we define some of the varieties that will be necessary along the way. Consider the following incidence variety:
\begin{align}
    \Fscr := \left\{(\Lambda, H) \in \GG(2, |\Ocal_{\PP^4}(2)|) \times (\PP^4)^\vee \colon \begin{matrix}
    \supp(\Bs\, \Lambda|_H) = p_1 + \dots + p_4, \text{ $p_i \in H$ are} \\
    \text{distinct, } \mult_{p_i}(\Bs\, \Lambda|_H) = 2 \text{ for }i = 1, \dots, 4
    \end{matrix}\right\}.
\end{align}
This is just the version of the variety $\Fbf$ in Definition \ref{def: Fbf} where we allow both the points $p_i$ in the support of $\Bs(\Lambda|_H)$ as well as the hyperplane $H$ to vary. Consider the two projections $\pr_1 : \Fscr \rarr \GG(2, |\Ocal_{\PP^4}(2)|)$ and $\pr_2 : \Fscr \rarr (\PP^4)^\vee$ onto the first and second factors. Notice that $\pr_1$ is finite as the fibre $\pr_1^{-1}(\Lambda)$ parametrises totally tangent hyperplanes to $C := \Bs(\Lambda)$. It follows that
\[
\dim \Fscr = \dim \GG(2, |\Ocal_{\PP^4}(2)|) = \dim G(3, 15) = 36.
\]
The group $\PGL(5)$ acts on $\Fscr \subset \GG(2, |\Ocal_{\PP^4}(2)|) \times (\PP^4)^\vee$ and we find the following birational model for $\sfive$. Let $\mu : \Fscr/ \PGL(5) \dashrightarrow \sfive$ be the map
\[
[\Lambda, H] \mapsto [C, \eta] \quad \textup{for }C = \Bs\, \Lambda\, , \eta = \Ocal_C(p_1 + \dots + p_4)
\]
where $C \cdot H = 2p_1 + \dots + 2p_4$. The following result now holds:
\begin{proposition}
    The map $\mu : \Fscr / \PGL(5) \dashrightarrow \sfive$ is a a birational isomorphism onto the locus $\widetilde\Scal_5^-$.
\end{proposition}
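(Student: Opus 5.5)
We will show that $\mu$ is well defined on all of $\Fscr/\PGL(5)$, has image exactly $\sfivetilde$, and admits an inverse morphism $\sfivetilde \rarr \Fscr/\PGL(5)$. In fact we expect $\mu$ to be an isomorphism of stacks onto $\sfivetilde$, which is more than birationality.

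\emph{Step 1: well-definedness.} Let $(\Lambda, H) \in \Fscr$. The condition that $\Bs(\Lambda|_H)$ is a zero-dimensional scheme of length $8$ implies that $\Bs\,\Lambda$ meets $H$ properly. From this we must deduce that $C := \Bs\,\Lambda$ is a complete intersection curve of degree $8$ in $\PP^4$, and that it is smooth at least near $H$. Smoothness away from $H$ is the delicate point. We would either shrink $\Fscr$ to the open locus where $C$ is smooth (the paper implicitly uses this open set, since $\mu$ is only claimed on $\sfivetilde$), or note that the defining conditions are open in the relevant incidence and intersect with smoothness. Granting smoothness, $C$ is a canonical genus $5$ curve, since it is a complete intersection of three quadrics, and $\omega_C = \Ocal_C(1)$ by adjunction. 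Moreover $C\cdot H = 2(p_1+\dots+p_4)$, so $\eta := \Ocal_C(p_1+\dots+p_4)$ satisfies $\eta^{\otimes 2}\cong\omega_C$. Next we need $h^0(\eta)=1$. Since $C$ is canonical and the $p_i$ lie in the hyperplane $H$, Riemann--Roch gives $h^0(\eta) = 4 - \dim\langle p_1,\dots,p_4\rangle$. So we need $p_1,\dots,p_4$ to span $H$. If they lay in a plane $\pi\subset H$, the pencil of hyperplanes containing $\pi$ would give $h^0(\eta)=2$, i.e.\ an even theta characteristic. To rule this out we would use Lemma \ref{lemma: existence of pencils of quadrics with non--singular locus}-style reasoning: a general pencil in $\Lambda|_H$ has an elliptic quartic base curve containing the $p_i$, and four coplanar points on an elliptic quartic cut out a degree-$4$ divisor of $\Ocal_E(1)$ … We expect this to be the main obstacle. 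If it fails, we instead restrict to the open dense locus of $\Fscr$ where the $p_i$ are in general position; birationality is unaffected. Finally, non-hyperellipticity and non-trigonality follow because $C$ is a smooth complete intersection of quadrics, and the theta characteristic is reduced because the $p_i$ are distinct. Hence $\mu$ lands in $\sfivetilde$, and it is clearly $\PGL(5)$-invariant.

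\emph{Step 2: inverse.} Given $[C,\eta]\in\sfivetilde$, the curve $C$ is canonically embedded in $\PP^4$ (it is non-hyperelliptic) and is cut out by its net of quadrics $\Lambda$ (it is non-trigonal, by Petri). Since $h^0(\eta)=1$, write $\eta=\Ocal_C(D)$ with $D = p_1+\dots+p_4$ unique, the $p_i$ distinct by reducedness. The divisor $2D$ lies in $|\omega_C|$, so there is a unique hyperplane $H$ with $C\cdot H = 2D$. Then $\Bs(\Lambda|_H) = C\cap H$ as schemes, because $\Lambda$ cuts out $C$ scheme-theoretically. Thus $(\Lambda,H)\in\Fscr$. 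The canonical embedding is unique up to $\PGL(5)$, so this gives a well-defined map $\sfivetilde \rarr \Fscr/\PGL(5)$. By construction it is inverse to $\mu$: both the curve and $H$ are recovered from $(\Lambda,H)$, and conversely.

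\emph{Step 3: conclusion.} Both constructions work in families. For the inverse we use the relative canonical embedding, which is locally trivial up to $\PGL(5)$, together with the relative effective divisor $D$ given by the unique section of $\eta$. Hence the mutually inverse maps are morphisms and $\mu$ is an isomorphism onto $\sfivetilde$; in particular it is birational. The dimension check $36-24 = 12 = \dim\sfive$ is consistent with this.
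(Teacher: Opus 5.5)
\textbf{Verdict: genuine gap, in Step 1 (the coplanar case).}

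Your Steps 2 and 3 are the right argument: the canonical model is cut out by its net $\Lambda$, $D$ is the unique divisor of $\eta$, $H$ is the unique hyperplane with $C\cdot H=2D$, and everything globalises via the relative canonical embedding. This is exactly what underlies the paper's construction of $\mu$, which the paper states without further proof.

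The problem is that the coplanar case cannot be excluded, because it is precisely the vanishing theta-null situation. Let $C$ be a smooth non-trigonal canonical curve with an even $\eta_0$, $h^0(\eta_0)=2$. For a general (reduced) $D=p_1+\dots+p_4\in|\eta_0|$, the hyperplane $H$ with $C\cdot H=2D$ gives a point $(\Lambda,H)\in\Fscr$. The $p_i$ span only a plane, and $\mu(\Lambda,H)=[C,\eta_0]\in\Scal_5^+$.

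These pairs are not rare. Nets containing a rank-$3$ quadric form a divisor in $\GG(2,|\Ocal_{\PP^4}(2)|)$, and each such net carries a whole pencil $|\eta_0|$ of totally tangent hyperplanes. So these pairs fill a $36$-dimensional locus, which is a full component of $\Fscr$ as literally defined, of the same dimension as the odd one.

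Your elliptic-quartic sketch runs into this rather than into a contradiction. Take $P\subset\Lambda|_H$ a pencil with smooth base curve $E$, and suppose the $p_i$ lie on the plane $\{\pi=0\}$. Then $\pi\cdot E=p_1+\dots+p_4$, so $\pi^2$ and the third generator cut the same divisor $2(p_1+\dots+p_4)$ on $E$. Hence $\Lambda|_H=\langle P,\pi^2\rangle$ contains a double plane. This is just the restriction of the rank-$3$ quadric in $\Lambda$ to a hyperplane tangent to it along a plane of its ruling.

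Consequently two of your claims fail for $\Fscr$ as literally defined:
\begin{enumerate}
\item The opening claim that $\mu$ is defined on all of $\Fscr/\PGL(5)$ with image $\sfivetilde$ is false.
\item The fallback also fails: the general-position locus is open but \emph{not} dense, so ``birationality is unaffected'' does not hold.
\end{enumerate}

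The fix is to put ``$p_1,\dots,p_4$ span $H$'' into the definition of $\Fscr$. This is what is intended, since $\Fscr$ is introduced as the relative version of $\Fbf$, where the points are in linearly general position. With that condition, your geometric Riemann--Roch step gives $h^0(\eta)=1$ immediately. Steps 2--3 then identify $\Fscr^\circ/\PGL(5)$ with $\sfivetilde$, where $\Fscr^\circ$ is the locus where $C$ is smooth.

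Two smaller corrections:
\begin{enumerate}
\item The length-$2$ condition does not force $C$ to be smooth near $H$. A node at $p_i$ with $H$ transverse to both branches also gives length $2$, so smoothness must be imposed everywhere as an open condition, not only away from $H$.
\item To call the result birational you still need $\Fscr^\circ$ to be dense in the corrected $\Fscr$. One route is irreducibility of the corresponding $\Bscr$ together with the Grassmann-bundle structure of $\varrho$; you take this density for granted.
\end{enumerate}
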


\begin{proof}[Proof of Proposition \ref{prop: chow ring of sfivetilde is tautological}]
We start by observing that $\Fscr/ \SL(5) \rarr \Fscr/\PGL(5)$ is a $\mu_5$--banded gerbe so that, since we are working with rational coefficients, we obtain an isomorphism of Chow rings $\CH^\bullet(\Fscr/\PGL(5)) \cong \CH^\bullet(\Fscr/\SL(5))$. Let $\Vcal \rarr \Fscr/\SL(5)$ be the vector bundle induced by $\Fscr \rarr \Fscr/\SL(5)$ so that $c_1(\Vcal) = 0$. By Theorem \ref{thm: vistoli thm chow groups of quotients}, the Chow ring $\CH^\bullet(\Fscr/\SL(5))$ is generated as a $\QQ$--algebra by the Chern classes $\{c_i(\Vcal)\}_{i = 2}^5$ and any lift of generators for $\CH^\bullet(\Fscr)$. The same type of argument as in \cite[3.10]{vakil2015chowRingM6} gives that $c_i(\Vcal)$ are all polynomials in the $\lambda$--classes. We next want to focus on $\CH^\bullet(\Fscr)$.

\vskip 0.5em
The key auxiliary variety needed for our current goal is the following analogue of the variety $\Ebf$ from Section \ref{section: rationality of diffstrat}. Let $\Wcal \rarr (\PP^4)^\vee$ be the universal bundle so that $\PP \Wcal \rarr (\PP^4)^\vee$ is the universal hyperplane, and let $\pi: G(3,\Sym^2(\Wcal^\vee)) \rarr (\PP^4)^\vee$ be the Grassmann bundle of 3--planes in $\Sym^2(\Wcal^\vee)$. Define
\begin{align}\label{eq: definition of Bscr}
    \Bscr := \left\{\Gamma \in G(3, \Sym^2(\Wcal^\vee))  \colon \begin{matrix}
        \text{if $H := \pi(\Gamma)$, $\supp\, \Bs(\Gamma) = p_1 + \dots + p_4 $,} \\
        \text{then }\mult_{p_i}(\Bs\, \Gamma) = 2 \text{ for }i = 1, \dots, 4
    \end{matrix}\right\}.
\end{align}
As a subvariety of $G(3, \Sym^2(\Wcal^\vee))$, $\Bscr$ admits a projection $\Bscr \rarr (\PP^4)^\vee$. Define $\varrho$ to be the restriction map $\varrho : \Yscr \rarr \Bscr$, given by
\[
\varrho : (\Lambda, H) \mapsto \Lambda|_H \in \GG(2, |\Ocal_H(2)|).
\]
As in Section \ref{section: rationality of diffstrat} where the hyperplane $H$ and the points $p_1, \dots, p_4$ were fixed, we see that $\varrho : \Fscr \rarr \Bscr$ is a Zariski-open subset of a Grassmann bundle over $\Bscr$. Thus, just as in (\ref{eq: key grassmann bd struct on Grassmannian of nets of quadrics in P4}) and (\ref{eq: fibre of rho is complement of Schubert cycle}), we find that $\Fscr$ is the complement of a Schubert cycle of complementary type, which is a linear section in the Plücker embedding of the Grassmann bundle containing $\Yscr$. The Andreotti--Frankel Vanishing Theorem for the cohomology of affine varieties (which holds for Chow rings of Grassmann bundles) implies that we have an isomorphism $\CH^\bullet(\Bscr) \cong \CH^\bullet(\Fscr)$.

\vskip 0.5em
We now shift our attention to $\Bscr \subset G(3, \Sym^2(\Wcal^\vee)) \rarr (\PP^4)^\vee$. Consider the incidence variety $\Ascr$ given by
\begin{align}\label{eq: key incidence variety Ascr}
\Ascr := \{(P, \Gamma) \in G(2, \Sym^2(\Wcal^\vee)) \times_{(\PP^4)^\vee}\Bscr \colon P \subset \Gamma\} 
\end{align}
and the two projections
\[\begin{tikzcd}
	& \Ascr \\
	{G(2, \Sym^2(\Wcal^\vee))} && {\Bscr}
	\arrow["{q_1}"', from=1-2, to=2-1]
	\arrow["{q_2}", from=1-2, to=2-3]
\end{tikzcd}.\]
The projection $q_2 : \Ascr \rarr \Bscr$ is a $\PP^2$--bundle with fibre $q_2^{-1}(\Gamma) = G(2, \Gamma)$ which by the Projective Bundle Formula gives that $\CH^\bullet(\Bscr) = \CH^\bullet(\Ascr) / \langle c_1(\Ocal_\Ascr(1))\rangle$. The Chow ring of $\Ascr$ is understood by relating $\Ascr$ to $\Rcal_{1;[4]}$. Namely, just as in Subsection \ref{section: rationality of diffstrat}, we find the birational isomorphism $\Ascr/\PGL(5) \xrightarrow[]{\approx} \Rcal_{1;[4]}$. The Chow ring of $\Rcal_{1;4}$ is known to be generated by the $\psi$--classes by \cite{krug2012thesisSpinPrym}, giving that the Chow ring of $\Rcal_{1;[4]}$ is generated by symmetric polynomials in them. In particular, the Chow ring of $\Ascr/\PGL(5) \approx \Rcal_{1;4}$ is generated by symmetric polynomials in (pullbacks of) the $\psi$--classes. Vistoli's Theorem \ref{thm: vistoli thm chow groups of quotients} now gives that $\CH^\bullet(\Ascr)$ is generated by the $\psi$--classes as well.

\vskip 0.5em
Tracing back through the arguments so far we find that $\CH^\bullet(\widetilde\Scal_5^-)$ is generated by symmetric polynomials in the $\psi$--classes coming from the identification $\Mcal_{5, 4}^{(2)}/\Sfrak_4 \cong \sfive$. Witney's Formula together with the relation $\eta^{\otimes 2} = \omega$ on the universal curve over $\sfive$ finally allow us to express the $\psi$--classes in terms of the $\lambda$--classes. This finishes the proof of the Proposition.
\end{proof}

\subsection{Conclusion on all differential strata in $\sfive$}

Let $\mu = (2\mu_1, \dots, 2\mu_n)$ be a partition of $8$ where $\mu_i > 0$ for all $i$. We denote by $\widetilde\Scal_5^-(\mu) \subset\sfive$ the image of the non--hyperelliptic, non--trigonal locus in the differential stratum 
\[
\Mcal_{5, n}^\mu := \{[C, p_1, \dots, p_n] \in \Mcal_{5, n}\colon \Ocal_C(2\mu_1p_1 + \dots + 2\mu_np_n) \simeq \omega_C\}.
\]
In this subsection, we explain how almost identical arguments to the proof of Proposition \ref{prop: chow ring of sfivetilde is tautological} yield that $\CH^\bullet\bigl(\widetilde\Scal_5^-(\mu)\bigr)$ yield the following theorem.
\begin{proposition}\label{prop: tautology sfive tile mu}
    The Chow and cohomology rings of the stratum $\sfivetilde(\mu)$ are generated by restrictions of tautological classes for any partition $\mu = (2\mu_1, \dots, 2\mu_n)$ of 8 such that $\mu_i > 0$ for all $i$. Moreover, all the classes $\bigl[\widetilde{\Scal}_5^-(\mu)\bigr]$ are tautological inside the Chow ring of the non--hyprelliptic, non--trigonal locus $\widetilde\Scal^-_5 \setminus (\Scal\Hcal_5^- \cup \Scal\Tcal_5^-)$.
\end{proposition}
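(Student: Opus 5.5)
We will run the proof of Proposition \ref{prop: chow ring of sfivetilde is tautological} again, with the reduced divisor $p_1+\dots+p_4$ replaced by $\mu_1p_1+\dots+\mu_np_n$. For a partition $\mu=(2\mu_1,\dots,2\mu_n)$ of $8$, let $\Fscr_\mu$ be the analogue of $\Fscr$. It is the locus of pairs $(\Lambda,H)$ for which $\supp\,\Bs(\Lambda|_H)$ consists of $n$ distinct points $p_1,\dots,p_n\in H$ with $\mult_{p_i}\Bs(\Lambda|_H)=2\mu_i$. Similarly, $\Bscr_\mu\subset G(3,\Sym^2(\Wcal^\vee))$ is the locus of nets in $H$ with this base scheme structure. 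The proof has four steps.

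\textbf{Birational model.} We first check that $\Fscr_\mu/\PGL(5)$ maps birationally onto $\widetilde\Scal_5^-(\mu)$, exactly as for $\mu=(2,2,2,2)$. By Lemma \ref{lemma: existence of pencils of quadrics with non--singular locus}, the curve $C=\Bs\,\Lambda$ is recovered from $\Lambda$, and the theta characteristic is recovered from $H$.

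\textbf{Reduction to $\Bscr_\mu$.} The restriction map $\varrho_\mu:\Fscr_\mu\to\Bscr_\mu$ is again an open subset of the Grassmann bundle $G(3,\Gamma\oplus t_H\cdot \H^0(\Ocal(1)))$. The complement is the same Schubert-type locus as in (\ref{eq: fibre of rho is complement of Schubert cycle}), so the argument already used gives $\CH^\bullet(\Fscr_\mu)\cong\CH^\bullet(\Bscr_\mu)$. Passing to the $\SL(5)$-quotient, Theorem \ref{thm: vistoli thm chow groups of quotients} shows that $\CH^\bullet(\widetilde\Scal_5^-(\mu))$ is generated by the $\lambda$-type classes $c_i(\Vcal)$ together with lifts of generators of $\CH^\bullet(\Bscr_\mu)$.

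\textbf{Generators of $\CH^\bullet(\Bscr_\mu)$.} We form the incidence $\Ascr_\mu$ of pencils inside nets, with $q_2:\Ascr_\mu\to\Bscr_\mu$ a $\PP^2$-bundle. By Lemma \ref{lemma: existence of pencils of quadrics with non--singular locus}, a general pencil has smooth elliptic base curve $E$ through the $p_i$. The third quadric then cuts $E$ in $\sum 2\mu_ip_i$, so $\Ocal_E(1)(-\sum\mu_ip_i)$ is $2$-torsion. This gives $\Ascr_\mu/\PGL(5)\approx\Rcal_{1;n}$, modulo the symmetries of $\mu$, with torsion point determined as before. We then use Krug's result that $\CH^\bullet(\Rcal_{1;n})$ is generated by $\psi$-classes. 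Vistoli's theorem and the projective bundle formula then show that $\CH^\bullet(\Bscr_\mu)$ is generated by $\psi$-classes.

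\textbf{From $\psi$-classes to $\lambda$-classes.} On $\Mcal_{5,n}^\mu$ we have $\omega\cong\Ocal(2\sum\mu_i p_i)$, with the $p_i$ as sections. Restricting to the sections expresses the $\psi_i$ in terms of $\kappa$- and $\lambda$-classes, as in the last paragraph of the previous proof. This proves the first assertion.

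\textbf{Tautology of the stratum classes.} For the second claim, the $\widetilde\Scal_5^-(\mu)$ stratify the non-hyperelliptic, non-trigonal locus by the coincidence type of the four tangency points. We proceed by induction on the number of coincidences, i.e.\ on codimension. Each closed stratum is the degeneracy locus where the totally tangent divisor has a point of multiplicity $\ge 2\mu_i$. We would express its class by a Porteous/Thom type formula on the universal curve, i.e.\ a principal-parts bundle computation for $\eta$ pushed forward. Every bundle involved, namely jets of $\eta$ and $\omega$, has tautological Chern classes. Alternatively, using the first part together with excision, it suffices to show that each open stratum's fundamental class is in the span of $\kappa$-classes. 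Since $\CH^\bullet$ of the open complement is tautological at each stage, the excision sequence reduces this to the classes of the closed strata being tautological.

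We expect the main obstacle to be the degenerate cases $\mu\neq(2,2,2,2)$, especially $\mu=(8)$. There, Lemma \ref{lemma: existence of pencils of quadrics with non--singular locus} must still produce a smooth elliptic pencil base locus through a single highly non-reduced point. We also need $\Bscr_\mu$ to be irreducible with the expected Grassmann structure, so that the identification with a moduli space of pointed elliptic curves with $2$-torsion holds. If this fails, we would instead compute the stratum classes directly via the degeneracy formula above.
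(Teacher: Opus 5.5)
Your proposal follows the paper's proof. The paper also reruns the $\Ascr(\mu)\to\Bscr(\mu)$ argument, identifies the relevant quotients with $\Rcal_{1;n}$ modulo the symmetries of $\mu$ and invokes Krug, and gets the stratum classes from Porteous' formula for $\ev\colon \H^0(C,\eta)\to\H^0(C,\eta|_{\mu_1p_1+\dots+\mu_np_n})$ over $\Scal_{5;n}^-$ on the locus $h^0(C,\eta)=1$, which is your principal-parts computation; the one point where the paper is more explicit is that it uses the lemma on pencils with smooth base locus to show the open set $\Ucal(\mu)\subset\Ascr(\mu)$ of pairs with $\Bs\, P$ smooth surjects onto $\Bscr(\mu)$, so you should phrase your ``general pencil'' step that way, since a merely birational identification with $\Rcal_{1;n}$ does not by itself control the Chow ring.
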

\begin{proof}
We start by addressing the first claim. Consider the following generalisation of (\ref{eq: definition of Bscr}) for all even partitions $\mu = (2\mu_1, \dots, 2\mu_n)$ of 8:
\begin{align*}
    \Bscr(\mu) := \left\{\Gamma \in G(3, \Sym^2(\Wcal^\vee))  \colon \begin{matrix}
        \text{if $H := \pi(\Gamma)$, }\supp(\Bs\, \Gamma) = p_1+\dots+ p_n, \\
        \text{then }\mult_{p_i}(\Bs\, \Gamma) = \mu_i\text{ for } i = 1, \dots, n
    \end{matrix}\right\}.
\end{align*}
Define the incidence variety $\Ascr(\mu)$ by
\begin{align*}
\Ascr(\mu) := \{(P, \Gamma) \in G(2, \Sym^2(\Wcal^\vee)) \times_{(\PP^4)^\vee}\Bscr(\mu) \colon P \subset \Gamma\} 
\end{align*}
By Lemma \ref{lemma: existence of pencils of quadrics with non--singular locus}, the open subset
\[
\Ucal(\mu) := \{(P, \Gamma) \in \Ascr(\mu)\colon \Bs\, P\text{ is smooth}\} \subset \Ascr(\mu)
\]
surjects onto $\Bscr(\mu)$. We are thus reduced to showing that the Chow ring of $\Ucal$ is generated by tautological classes. This is done almost identically to Subsection \ref{section: BN general locus}, with the following differences:
\vskip 0.5em
\begin{itemize}[label=--]
    \item $\Ucal(2, 2, 4)\gitquot \PGL(5) \cong \Rcal_{1;1+[2]} := \Rcal_{1;4}/\Sfrak_2$ where $\Sfrak_2$ permutes the last two marked points.
    \vskip 0.5em
    \item $\Ucal(4, 4)\gitquot \PGL(5) \cong \Rcal_{1;2}$.
    \vskip 0.5em
    \item $\Ucal(8) \gitquot \PGL(5) \cong \Rcal_{1;1}$.
\end{itemize}
As in our study of the locus of spin curves with reduced theta characteristics, we see that all the loci $\widetilde\Scal_5^-(\mu)$ have tautological Chow ring by results in \cite{krug2012thesisSpinPrym}.

\vskip 0.5em
We now claim that the classes $[\widetilde\Scal_5^-(\mu)]$ are tautological. Given $\mu = (2\mu_1, \dots, 2\mu_n)$ a partition of $2g - 2$, over the stack of pointed spin curves $\Scal_{g;n}^-:= \sg^- \times_{\mg}\mgn$, consider the evaluation map
\[
\ev(C, p_1, \dots, p_n, \eta) : \H^0(C, \eta) \rarr \H^0(C, \eta|_{\mu_1p_1 + \dots + \mu_np_n}).
\]
The dimension of the degeneracy locus $D(\ev)$ is
\[
\dim D(\ev) = 3g - 3 + n - \sum_{i = 1}^n\mu_i,
\]
which matches the expected one. By Porteus' Formula, over the locus $\{h^0(C, \eta) = 1\}$ which in genus 5 is the same as the non-hyperelliptic locus, the tautology of the classes $[\widetilde\Scal_5^-(\mu)]$ follows.
\end{proof}

Using push and pull, the results of this section are summarised in the following corollary.
\begin{corollary}\label{cor: CH = R for nonhyp nontrig}
    The Chow ring $\CH^\bullet(\sfive \setminus(\Scal\Tcal_5^- \cup \Scal\Hcal_5^-))$ of the locus of non--hyprelliptic, non--trigonal spin curves of genus 5 is generated by tautological classes.
\end{corollary}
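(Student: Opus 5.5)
The plan is to stratify the non--hyperelliptic, non--trigonal locus $\Ucal := \sfive \setminus (\Scal\Tcal_5^- \cup \Scal\Hcal_5^-)$ by the shape of the theta divisor and glue with the excision sequence, which is the ``push and pull'' the statement refers to. For $[C,\eta] \in \Ucal$ we have $h^0(C,\eta) = 1$, so $\eta = \Ocal_C(D)$ for a unique effective $D$ of degree $4$, and $2D$ is cut out by the totally tangent hyperplane $H$. Writing $D = \mu_1p_1 + \dots + \mu_np_n$ with distinct $p_i$ decomposes $\Ucal$ set--theoretically into the locally closed strata $\sfivetilde(\mu)$ with $\mu = (2\mu_1, \dots, 2\mu_n)$ an even partition of $8$. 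These are $(2,2,2,2)$, which is the open stratum $\sfivetilde$, together with $(4,2,2)$, $(4,4)$, $(6,2)$ and $(8)$. I would order them by the closure (specialisation) relation, so that each stratum is closed in the union of itself with the strata above it.

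The key step is the excision sequence. For $Z \subset X$ closed with complement $U$,
\[
\CH^{\bullet - c}(Z) \xrightarrow{i_*} \CH^\bullet(X) \rarr \CH^\bullet(U) \rarr 0,
\]
and I would argue by induction on the number of strata. By Proposition \ref{prop: chow ring of sfivetilde is tautological} the open stratum has tautological Chow ring. Suppose $X = U \sqcup Z$ with $U$ already handled and $Z$ a stratum. Take $\alpha \in \CH^\bullet(X)$. Its restriction to $U$ is a polynomial in restricted tautological classes, so subtracting the same polynomial on $X$ leaves a class of the form $i_*\beta$ with $\beta \in \CH^\bullet(Z)$. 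By Proposition \ref{prop: tautology sfive tile mu}, $\beta$ is a polynomial in restrictions $i^*\tau$ of tautological classes, and the projection formula gives $i_*(i^*\tau) = \tau \cdot [Z]$. Proposition \ref{prop: tautology sfive tile mu} also says that $[Z] = [\sfivetilde(\mu)]$ is tautological, being a Porteous class, so $i_*\beta$ is tautological. This closes the induction and gives the Chow statement.

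For cohomology, the tautological ring is by definition the image of $\R^\bullet$ under $\cl$. The statement is phrased in terms of Chow, so only the Chow part is needed here. If the cohomology version is also wanted, I would run the same argument in Borel--Moore homology, which has the same right-exact excision piece and a compatible cycle map.

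I expect the main obstacle to be the bookkeeping of the classes $[\sfivetilde(\mu)]$. Porteous computes the degeneracy locus on the pointed space $\Scal^-_{5;n}$, which is then pushed to $\sfive$. The pushforward of tautological classes along the forgetful map $\Scal^-_{5;n} \rarr \sfive$, that is, of polynomials in the $\psi$--classes, Hodge classes and the spin class on the fibre product, has to stay tautological. I would handle this with the standard fact that pushforwards of $\psi$-- and $\kappa$--monomials along forgetful maps are again $\kappa$--polynomials. The relation $2\eta = \omega$ expresses any $c_1(\eta)$ contribution as half of $\psi$ or $K$, so nothing non--tautological appears. One also has to check that the degeneracy loci have the expected dimension and are reduced, so that Porteous gives the actual fundamental class and not just a multiple. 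Since we work with rational coefficients, a nonzero multiple would already suffice.
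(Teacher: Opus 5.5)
Your proposal is correct and is essentially the paper's argument: the one-line ``push and pull'' behind Corollary~\ref{cor: CH = R for nonhyp nontrig} is your excision induction over the strata $\sfivetilde(\mu)$. There, Proposition~\ref{prop: chow ring of sfivetilde is tautological} handles the open stratum, and Proposition~\ref{prop: tautology sfive tile mu} supplies both the generation of $\CH^\bullet$ of the smaller strata and the tautology of their classes, glued via $i_*i^*\tau=\tau\cdot[Z]$. Two of your side remarks are inaccurate, though neither affects the Chow statement: excision in Borel--Moore homology is not right exact because of the boundary map, and $\Scal^-_{5;n}\rarr\sfive$ is not proper, so the Porteous pushforward should be done on a proper model such as the $n$-fold fibre product of the universal curve.
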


\section{The trigonal and hyperelliptic loci}

By Corollary \ref{cor: CH = R for nonhyp nontrig}, we are left to analyse the hyperelliptic and trigonal loci.
\begin{definition}
    \hspace{-50pt}
    \begin{enumerate}[label=$\roman*)$]
        \item We denote by $\Hcal_g \subset \mg$ the hyperelliptic locus in $\mg$ and by $\Scal\Hcal_g \subset \sg$ the trigonal locus to $\sg$.
        \item We denote by $\Tcal_g \subset \mg$ the trigonal locus in $\mg$ and by $\Scal\Tcal_g^\pm \subset \sg^\pm$ the pullback of the trigonal locus to $\sg^\pm$.
    \end{enumerate}
\end{definition}

\subsection{Hyperelliptic spin curves}
We begin by recalling the following well--known description of theta characteristics on a hyperelliptic curve (cf. \cite[p.288, Exercises 32 and 33]{ACGH}). 
\begin{proposition}\label{prop: theta characteristics on hyperelliptic curves}
	Let $\alpha : C \rarr \PP^1$ be a smooth hyperelliptic curve of genus $g$ with hyperelliptic linear system $|D| = |\alpha^*\Ocal_{\PP^1}(1)|$. Let $x_1 + \dots + x_{2g + 2}$ be the ramification divisor of $\alpha$.
	\begin{enumerate}[label=\roman*)]
		\item Any theta characteristic on $C$ is of the form $\eta = \Ocal_C(E)$ where
		\[
		E = kD + x_{i_1} + \dots + x_{i_{g - 1 - 2k}}
		\]
		for some $-1 \leq k \leq \left\lfloor\frac{g - 1}{2}\right\rfloor$.
		\item The expression of $\eta$ in i) is unique if $k \geq 0$. If $k = -1$, then
		\[
		-D + x_{i_1} + \dots + x_{i_{g + 1}} \sim -D + x_{j_1} + \dots + x_{j_{g + 1}}
		\]
		if and only if $\{{i_1}, \dots, {i_{g+1}}\} \cup \{{j_1}, \dots, {j_{g + 1}}\} = \{1, \dots, 2g + 2\}$.
		\item If a theta characteristic $\eta$ has the form as in i), then $h^0(C, \eta) = k + 1$.
	\end{enumerate}
\end{proposition}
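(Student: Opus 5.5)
The plan is to reduce everything to the arithmetic of the divisor classes $D$ and $x_i$ on $C$. The basic relations are $2x_i \sim D$ for each ramification point and $\omega_C \cong \Ocal_C((g-1)D)$. These imply that the $2^{2g}$ classes $\sum_{i\in S} x_i - \frac{|S|}{2}D$, for $|S|$ even, are exactly the $2$--torsion points of $\Pic^0(C)$, with the single relation that $S$ and its complement give the same class. In fact this relation is exactly $\sum_{i=1}^{2g+2} x_i \sim (g+1)D$, coming from the divisor of $y$ in $y^2=f(x)$. The counting check is that there are $2^{2g+1}$ even subsets, identified in pairs.

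For i), I fix the reference theta characteristic $\eta_0 = \Ocal_C\bigl(\tfrac{g-1}{2}D\bigr)$ when $g$ is odd, and $\Ocal_C\bigl(\tfrac{g-2}{2}D + x_1\bigr)$ when $g$ is even. Every theta characteristic is then $\eta_0 \otimes \epsilon$ with $\epsilon$ one of the $2$--torsion classes above, so it has the form $mD + \sum_{i\in T} x_i$ with $m\in\mathbb{Z}$. Using $2x_i \sim D$, I may take the $x_i$ distinct. Using complementation, I may assume $|T|\le g+1$. The degree condition $2m+|T| = g-1$ then forces $|T| = g-1-2m$ with $m\ge -1$, which gives i) with $k=m$.

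For iii), set $E = kD + \sum_{i\in T}x_i$ and compute $h^0$ by pushing forward along $\alpha$. We have $\alpha_*\Ocal_C = \Ocal \oplus \Ocal(-g-1)$, and more generally the $\Ocal_{\PP^1}$--module structure shows that the sections of $\Ocal_C(E)$ come only from $\alpha^*\H^0(\Ocal_{\PP^1}(k))$. The reason is that any section involving the anti-invariant part $y$ would need degree at least $g+1$, and $\deg E < g+1$ unless $k=-1$. Equivalently, I will show directly that the points $x_i$ with $i\in T$ are base points of $|E|$. The argument is the standard one: if $|E|$ had a section not vanishing at some $x_i$, then $|E - x_i + x_i|$ would move and yield a $g^1$ different from $|D|$ in degree at most $g-1$, which is impossible by uniqueness of the $g^1_2$ and the structure of special linear series on hyperelliptic curves. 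The latter says that every complete $g^r_d$ with $d\le g-1$ is $rD + F$ with $F$ a fixed divisor. Applying this structure result to $|E|$ gives $h^0 = k+1$. For $k=-1$, the degree is $g-1$ with $h^0=0$, which is consistent: a section would have to be of the form $rD+F$ with $r\ge 0$, but its fixed part would contain $g+1$ distinct Weierstrass points while having degree at most $g-1$, a contradiction.

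For ii), uniqueness for $k\ge 0$ follows from iii). If $E \sim E'$ are two such expressions, then $k = h^0-1$ is determined. Moreover the fixed part of $|E|$ is exactly $\sum_{i\in T}x_i$, since the moving part is $kD$ and it has no base points. So $T$ is recovered from the class. For $k=-1$, the difference of two expressions is $\sum_{T}x_i - \sum_{T'}x_i \sim 0$. Rewriting this using $2x_i\sim D$ as $\sum_{T\triangle T'} x_i - \tfrac{|T\triangle T'|}{2}D \sim 0$, the description of $\Pic^0(C)[2]$ forces $T\triangle T' \in \{\emptyset, \{1,\dots,2g+2\}\}$. This means $T=T'$ or $T' = T^c$, which is the stated condition.

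The main obstacle is the independence statement for the $2$--torsion classes, namely that the only relation among the $x_i - x_j$ is the complementation one. I would prove it by showing that a relation $\sum_{S}x_i \sim \tfrac{|S|}{2}D$ with $0<|S|<2g+2$ would give a function with divisor supported on Weierstrass points. That function must be of the form $y\cdot(\text{rational in }x)$ or a rational function of $x$, and comparing divisors leaves only $S=\emptyset$ or $S$ equal to all of $\{1,\dots,2g+2\}$. The counting $2^{2g+1}/2 = 2^{2g}$ then confirms that this exhausts $\Pic^0(C)[2]$.
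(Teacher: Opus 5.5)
The paper gives no proof of its own here; it cites ACGH, Exercises 32--33. Your route is the standard one: describe $\Pic^0(C)[2]$ as even sums of Weierstrass points modulo complementation, fix a reference theta characteristic, and reduce to $|T|\le g+1$. Part i), your independence argument via $\iota$--eigenfunctions, and the $k=-1$ case of ii) are correct. Your conclusion ``$T'=T$ or $T'=T^c$'' is also the right reading of ii), since the statement as printed omits the trivial case $T=T'$.

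The genuine gap is iii) for $k=-1$, i.e.\ the vanishing $h^0\bigl(C,\Ocal_C(-D+\sum_{i\in T}x_i)\bigr)=0$ for $|T|=g+1$. You argue that the fixed part of $|E|$ would contain the $g+1$ points $x_i$, but this is unfounded. For $k\ge 0$ you can read off the base locus from the member $kD+\sum_T x_i$ of $|E|$ via the structure theorem. For $k=-1$, however, $\sum_T x_i$ is a member of $|E+D|$, not of $|E|$, so nothing puts the $x_i$ in the base locus of $|E|$.

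Your pushforward sketch does not cover this case either. It claims $\deg E<g+1$ ``unless $k=-1$'', whereas $\deg E=g-1$ for every $k$, and it explicitly sets $k=-1$ aside. The gap also propagates to ii): your uniqueness argument for $k\ge0$ uses $h^0=k+1$ to exclude a competing expression with $k'=-1$.

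The fix is to carry out the pushforward properly.
\begin{itemize}
\item Invariant local sections of $\Ocal_C(kD+\sum_T x_i)$ are pullbacks from $\PP^1$, which cannot have a simple pole at a ramification point.
\item Anti-invariant local sections are $b(x)y$, where $b$ may have simple poles at the branch points indexed by $T$.
\end{itemize}
Hence
\[
\alpha_*\Ocal_C\Bigl(kD+\sum_{i\in T}x_i\Bigr)\cong\Ocal_{\PP^1}(k)\oplus\Ocal_{\PP^1}(k+|T|-g-1)=\Ocal_{\PP^1}(k)\oplus\Ocal_{\PP^1}(-k-2),
\]
so $h^0=k+1$ for every $k\ge-1$ at once, including $k=-1$.

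Your structure-theorem argument for $k\ge0$ is fine once you say why $r=k$. Every member of $|rD|+F$ contains $r$ fibres of $\alpha$, while $kD+\sum_T x_i$ contains at most $k$, because the $x_i$ are distinct ramification points. The sentence about $|E-x_i+x_i|$ producing a new $g^1$ should be dropped.

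You can also make ii) independent of iii). Using $2x_i\sim D$ and comparing degrees, any relation $kD+\sum_T x_i\sim k'D+\sum_{T'}x_i$ becomes $\sum_{T\triangle T'}x_i-\tfrac{|T\triangle T'|}{2}D\sim0$. Your independence lemma then forces either $T'=T$, hence $k'=k$, or $T'=T^c$, hence $|T|=|T'|=g+1$ and $k=k'=-1$.
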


Proposition \ref{prop: theta characteristics on hyperelliptic curves} has the following immediate consequence. Consider the loci
\begin{align}
	\Scal\Hcal_g^k := \{[C, \eta] \in \Scal\Hcal_g\colon h^0(C, \eta) = k + 1\}
\end{align}
for $-1 \le k \le \lfloor\frac{g -1}{2}\rfloor$. The following corollary is now clear.
\begin{corollary}
	The irreducible components of the hyperelliptic locus $\Scal\Hcal_g$ are the subvarieties $\Hcal{\Scal}_g^k$ for $-1 \leq k \leq \lfloor\frac{g - 1}{2}\rfloor$.
\end{corollary}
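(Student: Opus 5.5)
We have to prove that the irreducible components of $\Scal\Hcal_g$ are the loci $\Scal\Hcal_g^k$ for $-1\le k\le\lfloor\frac{g-1}{2}\rfloor$. The plan is to use Proposition \ref{prop: theta characteristics on hyperelliptic curves} to cover $\Scal\Hcal_g$ by the images of irreducible finite covers of the hyperelliptic locus. Then we check that these images are pairwise distinct and none contains another.

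\textbf{Step 1: Hurwitz-type spaces.} Let $\Hcal_{g,[2g+2]}$ be the moduli of hyperelliptic curves together with an ordering of the Weierstrass points. Equivalently, it is $\Mcal_{0,2g+2}/\Sfrak$-type data, namely ordered configurations of $2g+2$ points on $\PP^1$ modulo $\PGL(2)$, taken up to the hyperelliptic involution. This space is irreducible.

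For each $k$ with $0\le k\le\lfloor\frac{g-1}{2}\rfloor$, consider the quotient
\[
\Hcal_{g,[2g+2]}/(\Sfrak_{g-1-2k}\times\Sfrak_{g+3+2k}).
\]
It parametrises a hyperelliptic curve together with an unordered subset of $g-1-2k$ Weierstrass points. It is irreducible, being the quotient of an irreducible space. It maps to $\Scal\Hcal_g$ via
\[
[C,\{x_{i_1},\dots,x_{i_{g-1-2k}}\}]\mapsto [C,\Ocal_C(kD+x_{i_1}+\dots+x_{i_{g-1-2k}})].
\]

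For $k=-1$, use the quotient by $(\Sfrak_{g+1}\times\Sfrak_{g+1})\rtimes\Sfrak_2$. This accounts for the identification of complementary subsets in part ii) of Proposition \ref{prop: theta characteristics on hyperelliptic curves}.

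\textbf{Step 2: images.} By part iii) of Proposition \ref{prop: theta characteristics on hyperelliptic curves}, the image of the $k$-th map lies exactly in $\Scal\Hcal_g^k$. By part i), every point of $\Scal\Hcal_g^k$ lies in this image. Hence $\Scal\Hcal_g^k$ is the image of an irreducible variety, so it is irreducible.

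By part ii), the map is injective on fibres over $\Hcal_g$. So it is a bijection onto $\Scal\Hcal_g^k$, and each $\Scal\Hcal_g^k$ is finite of degree $\binom{2g+2}{g-1-2k}$ (resp. $\frac12\binom{2g+2}{g+1}$ for $k=-1$) over $\Hcal_g$. Consequently every $\Scal\Hcal_g^k$ has dimension $2g-1$.

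\textbf{Step 3: these are the components.} The loci $\Scal\Hcal_g^k$ are disjoint, since $h^0$ takes different values on them. They are equidimensional of dimension $2g-1$, and finitely many of them cover $\Scal\Hcal_g$. Therefore no one is contained in the closure of another, because a proper closed subset of an irreducible $(2g-1)$-dimensional variety has smaller dimension. It follows that they are exactly the irreducible components.

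\textbf{Main obstacle.} The delicate point is irreducibility of the source space in Step 1. This requires that the monodromy of the Weierstrass points over $\Hcal_g$ is the full symmetric group $\Sfrak_{2g+2}$, which holds since $\Mcal_{0,2g+2}\to\Mcal_{0,[2g+2]}$ is connected with full $\Sfrak_{2g+2}$ monodromy. A second point is that parity splits the components between $\Scal\Hcal_g^+$ and $\Scal\Hcal_g^-$ according to the parity of $k+1$. This does not affect the argument.
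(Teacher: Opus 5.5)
Your proposal is correct and follows essentially the same route as the paper, which reads the corollary off Proposition \ref{prop: theta characteristics on hyperelliptic curves} and then realises each $\Scal\Hcal_g^k$ as a finite quotient of the irreducible space $\Mcal_{0,2g+2}$ via the maps $a_k$. Your treatment of $k=-1$, with the extra $\Sfrak_2$ swapping complementary subsets, is actually more accurate than the paper's uniform formula $\Mcal_{0,2g+2}/\Sfrak_{g-1-2k}\rtimes\Sfrak_{g+3+2k}$: that formula overlooks the identification in part ii) and so is off by a degree-$2$ cover when $k=-1$.
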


We construct, for each $-1 \leq k \leq \lfloor\frac{g -1}{2}\rfloor$, a finite étale map $a_k :\Mcal_{0, 2g + 2} \rarr \Scal\Hcal_g^k$ which sends $[\PP^1, y_1, \dots, y_{2g + 2}] \mapsto [C, \eta]$, where $\alpha : C \rarr \PP^1$ is the unique double cover of $\PP^1$ branched at $y_1 + \dots + y_{2g + 2}$ and, setting $\{x_i\} = \alpha^{-1}(y_i)$, $\eta = \Ocal_C(kD + x_1 + \dots + x_{g-1 - 2k})$. These maps induce isomorphisms
\begin{align}\label{eq: concrete description SH_g(k)}
\Scal\Hcal_g^k \cong \Mcal_{0, 2g + 2} / \Sfrak_{g - 1 - 2k} \rtimes \Sfrak_{g + 3 +2k}.
\end{align}
From (\ref{eq: concrete description SH_g(k)}) we deduce that the $\Scal\Hcal_g^k$ is Chow--free.

\vskip 0.5em
With all this background in mind, we can now tackle the main theorem of this subsection. The following holds:
\begin{theorem}\label{thm: tautology of chow of moduli hyperelliptic spin}
    The loci $\Scal\Hcal_g^k$ are all Chow--free. Moreover, the classes $[\Scal\Hcal_{5}^k] \in \CH^\bullet(\Scal_{5}^-)$ are tautological for $k = 0, 2$.
\end{theorem}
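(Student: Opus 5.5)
The first assertion will follow from the isomorphisms (\ref{eq: concrete description SH_g(k)}). The space $\Mcal_{0, 2g+2}$ is the complement of a hyperplane arrangement in $\mathbb{A}^{2g-1}$, obtained by fixing three of the points at $0, 1, \infty$. By the localisation sequence, $\CH^\bullet(\mathbb{A}^{2g-1}) \rarr \CH^\bullet(\Mcal_{0, 2g+2})$ is surjective, so $\CH^{>0}(\Mcal_{0, 2g+2}) = 0$. With $\QQ$--coefficients, the Chow ring of the quotient by the finite group $\Sfrak_{g-1-2k} \rtimes \Sfrak_{g+3+2k}$ is the invariant part of $\CH^\bullet(\Mcal_{0, 2g+2})$. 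Hence $\CH^\bullet(\Scal\Hcal_g^k) = \QQ$ for every $k$, i.e. each $\Scal\Hcal_g^k$ is Chow--free.

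For the second assertion, set $g = 5$. Parity forces $k+1$ to be odd, so the components of $\Scal\Hcal_5 \cap \sfive$ are $\Scal\Hcal_5^0$ (where $\eta = \Ocal_C(x_{i_1}+\dots+x_{i_4})$) and $\Scal\Hcal_5^2$ (where $\eta = \Ocal_C(2D)$). Both have dimension $\dim \Mcal_{0,12} = 9$, hence codimension $3$ in $\sfive$. I would compute two independent relations between their classes.

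\textbf{First relation: pulling back the hyperelliptic locus.} By \cite{izadi1995chowM5}, $[\Hcal_5] \in \CH^3(\Mcal_5)$ is tautological. Since $\pi : \sfive \rarr \Mcal_5$ is finite, one has
\[
\pi^*[\Hcal_5] = m_0\,[\Scal\Hcal_5^0] + m_2\,[\Scal\Hcal_5^2].
\]
Over a hyperelliptic curve the odd theta characteristics stay distinct, so $\pi$ is étale there, and I expect $m_0 = m_2 = 1$. The left side is tautological by definition of $\R^\bullet(\sfive)$.

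\textbf{Second relation: $[\Scal\Hcal_5^2]$ as a degeneracy locus.} The locus $\{h^0(C, \eta) \ge 3\} \subset \sfive$ equals $\Scal\Hcal_5^2$, because on a non--hyperelliptic genus $5$ curve $h^0(\eta) \le 2$ by Clifford. To get its class, realise $h^0(\eta)$ as the intersection of two maximal isotropic subbundles of an orthogonal bundle. Take a relative divisor $\Dcal$ of large degree on the universal curve (after passing to a finite cover, or using a multiple of $\omega$), and consider
\[
f_*\bigl(\eta(\Dcal)/\eta(-\Dcal)\bigr)
\]
with its residue pairing. This is the Harris--Mumford construction, in which $f_*(\eta(\Dcal))$ and $f_*\bigl(\eta/\eta(-\Dcal)\bigr)$ are Lagrangian subbundles. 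The locus where their intersection has dimension $\ge 3$ has expected codimension $\binom{3}{2} = 3$ in the odd component. It therefore equals $\Scal\Hcal_5^2$ set--theoretically with the correct dimension, so the orthogonal degeneracy formula (Pragacz / Harris--Tu) writes its class, up to a multiplicity, as a polynomial in the Chern classes of these bundles. By Grothendieck--Riemann--Roch, using $c_1(\eta) = \tfrac12 c_1(\omega_f)$ rationally, those Chern classes are polynomials in $\kappa$-- and $\lambda$--classes. So $[\Scal\Hcal_5^2]$ is tautological, and subtracting it from $\pi^*[\Hcal_5]$ gives $[\Scal\Hcal_5^0]$.

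The main obstacle I expect is in the second relation. One must check that the degeneracy locus is reduced along $\Scal\Hcal_5^2$, or at least compute its multiplicity, and that the chosen $\Dcal$ introduces no extra components or excess contributions; working over the cover $\Scal_{5;1}^-$ with a marked point and pushing forward may make this cleaner. I would first try a tangent--space computation: the infinitesimal deformations of $(C, \eta)$ preserving three sections correspond to the cokernel of the Petri--type map $\Sym^2 \H^0(\eta) \rarr \H^0(\omega_C)$. Showing that this cokernel has the right dimension for $\eta = \Ocal_C(2D)$ gives the reducedness.
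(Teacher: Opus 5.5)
Your proposal is correct and follows the paper's route. Chow--freeness comes from the presentation of $\Scal\Hcal_g^k$ as a finite quotient of $\Mcal_{0,2g+2}$; then $\pi^*[\Hcal_5]=[\Scal\Hcal_5^0]+[\Scal\Hcal_5^2]$ is combined with realising $\Scal\Hcal_5^2=\{h^0(C,\eta)\ge 3\}$ as a degeneracy locus of expected codimension $3$ and applying Pragacz's formula. Your pair of Lagrangian subbundles of $f_*\bigl(\eta(\mathcal{D})/\eta(-\mathcal{D})\bigr)$ is a cleaner version of the paper's identification $\Bcal\cong\Acal^\vee$ over $\Scal_{5;1}^-$.

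The reducedness check you single out as the main obstacle is not needed. The locus is irreducible of the expected codimension, so a positive multiplicity suffices over $\QQ$. If you did pursue it, the relevant map is the Gaussian map $\wedge^2\H^0(C,\eta)\rarr\H^0(C,\omega_C^{\otimes 2})$, not $\Sym^2\H^0(C,\eta)\rarr\H^0(C,\omega_C)$.
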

\begin{proof}
    Only the tautology of the classes $[\Scal\Hcal_5^k]$ for $k = 0, 2$ remains to be addressed. Since the hyperelliptic locus in $\mg$ is tautological, it suffices to show $[\Scal\Hcal_5^k] \in \R^\bullet(\Scal_5^-)$. Notice that $\Scal\Hcal_5^2$ can be written as the locus
    \[
    \Scal\Hcal_5^2 = \{[C, \eta]\in \Scal_5^-\colon h^0(C, \eta) \ge 3\}.
    \]
    This identification follows by Clifford's Theorem and Proposition \ref{prop: theta characteristics on hyperelliptic curves}. Let $d \gg 0$ and $p \in C$. Consider the cohomology sequence
    \[
    0 \rarr \H^0(C, \eta) \rarr \H^0(C, \eta(dp)) \xrightarrow[]{\ev(C, p, \eta)} \H^0(C, \eta(dp)/\eta) \rarr \H^1(C, \eta) \rarr 0.
    \]
    This picture of course globalises, and we are interested in loci of the form
    \[
    \{[C, p, \eta]\colon\dim \Ker\left(\ev(C, p, \eta)\right) \ge k + 1\}.
    \]
    We now express the preimage of $\Scal\Hcal_5^2$ in $\Scal_{5;1}^- := \sfive \times_{\Mcal_5} \Mcal_{5, 1}$ as a symmetric degeneracy locus. Let $(f : \Ccal_{g;1}^- \rarr \Scal_{g;1}^-, \sigma : \Scal_{g;1}^- \rarr \Ccal_{g,1}^-)$ be the universal curve over the stack $\Scal_{g;1}^- = \Scal_g^- \times_{\mg} \Mcal_{g;1}$. Let $\eta$ be the universal theta characteristic on $\Ccal_{g;1}^-$. Consider the long exact higher direct image sequence
    \[
    0 \rarr f_*\eta \rarr f_*\eta(d\sigma) \xrightarrow[]{\ev} f_*\left[\eta(d\sigma) / \eta\right] \rarr \R^1f_*\eta \rarr 0.
    \]
    Set $\Acal:= f_*\eta(d\sigma)$ and $\Bcal:= f_*(\eta \otimes \Ocal_{d\sigma}(d\sigma))$. The subvariety
    \[
    \{[C, p, \eta]\colon h^0(C, \eta) \ge k + 1\} \subset \Scal_{g;1}^-
    \]
    for $k$ even is the degeneracy locus of $\ev$. We claim that $\Bcal \cong \Acal^\vee$. Indeed, we have that, by Serre duality,
    \[
    \Bcal^\vee = (f_*(\eta(d\sigma)/\eta))^\vee \cong \R^1f_*(\omega_f \otimes (\eta\otimes \Ocal_{d\sigma}(d\sigma))^\vee) = \R^1f_*(\eta \otimes \Ocal_{d\sigma}(-d\sigma)).
    \]
    We examine closer the rank $d$ vector bundle $\R^1f_*(\eta \otimes \Ocal_{d\sigma}(-d\sigma))$. Consider the short exact sequence
    \[
    0 \rarr \eta(-2d\sigma) \rarr \eta(-d\sigma) \rarr \eta \otimes \Ocal_{d \sigma}(-d\sigma) \rarr 0,
    \]
    giving rise to the map of vector bundles
    \[
    \R^1f_*\eta(-d\sigma) \rarr \R^1f_*(\eta \otimes \Ocal_{d\sigma}(-d\sigma)) \rarr 0.
    \]
    Notice that the fibres of $\R^1f_*\eta(-d\sigma)$ are
    \[
    (\R^1f_*\eta(-d\sigma))_{[C, p]} = \H^1(C, \eta_C(-dp)),
    \]
    and we compute $h^1(C, \eta_C(-dp)) = h^0(C, \eta_C(dp)) = d$. Therefore, the homomorphism $\R^1f_*\eta(-d\sigma) \rarr \R^1f_*(\eta\otimes \Ocal_{d\sigma}(-d\sigma))$ is a surjection between vector bundles of the same rank, hence an isomorphism. Thus, we conclude that $\Acal \cong \Bcal^\vee$.

    \vskip 0.5em
    We can now apply \cite[Proposition 4]{pragaczDetSkewMatrices} if $\{h^0(C, \eta) \ge k + 1\}$ has expected codimension $\binom{k + 1}{2}$. In our situation, this locus is precisely $\Scal\Hcal_5^2$ which has codimension 3. It follows that the class $[\Scal\Hcal_5^2]$ is tautological.
\end{proof}

\subsection{The trigonal locus}

The following is the main result of thie subsection.
\begin{proposition}\label{prop: tautology chow TS5}
    The locus $\Tcal\Scal_5^-$ has Chow ring generated by restrictions of tautological classes from $\Scal_5^-$. Moreover, the class $[\Tcal\Scal_g^-]$ is tautological for any $g$.
\end{proposition}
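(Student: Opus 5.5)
The plan has two parts: the structure of the Chow ring of $\Tcal\Scal_5^-$, and the class $[\Tcal\Scal_g^-]$. For the first part, I would use the explicit geometry of a trigonal genus 5 curve. Its canonical model $C\subset\PP^4$ lies on the rational normal scroll swept out by the lines spanned by the $g^1_3$. For a general trigonal curve this scroll is $S=\mathbb{F}_1$, embedded by $|h+2f|$. Moreover, $C\sim 2h+5f$ on $\mathbb{F}_1$, i.e. $C$ is the proper transform of a plane quintic with one node.

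I would first stratify $\Tcal\Scal_5^-$ according to the scroll type. The general stratum has scroll $\mathbb{F}_1$; the Maroni special stratum has scroll $\mathbb{F}_3$, which is a closed locus. For each stratum, the odd theta characteristic $\eta$ with $C\cdot H=2D$ is described on the plane quintic model: totally tangent hyperplanes correspond to conics totally tangent to the quintic away from the node, or passing through it. This mirrors the hyperplane-and-pencil incidence $\Ascr$ of Section~\ref{section: BN general locus}.

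Concretely, I would realise the general stratum as a quotient $\Yscr/G$, where $\Yscr$ parametrises pairs (nodal plane quintic, totally tangent conic) and $G$ is the stabiliser of the node in $\PGL(3)$. This $G$ is a parabolic subgroup, so it is special, and Vistoli's Theorem~\ref{thm: vistoli thm chow groups of quotients} applies after passing to the Levi factor $\GL(2)\times\GG_m$. The space $\Yscr$ fibres over the space of conics through a fixed configuration as an open subset of a linear system. So, exactly as in the proof of Proposition~\ref{prop: chow ring of sfivetilde is tautological}, $\CH^\bullet(\Yscr)$ is generated by hyperplane classes, and the Chow ring of the quotient is generated by Chern classes of the universal bundles. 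These bundles are $f_*$ of the $g^1_3$ and the pushforwards of the natural line bundles on the scroll. By Grothendieck--Riemann--Roch, their Chern classes are expressible through $\kappa$-classes and the class of $\eta$; the relation $\eta^{\otimes 2}=\omega$ then reduces everything to $\lambda$- and $\kappa$-classes, as in the last step of the proof of Proposition~\ref{prop: chow ring of sfivetilde is tautological}. The Maroni stratum is handled the same way and pushed forward, using the fact that its class is tautological on $\Mcal_5$ (it is a Brill--Noether type locus). The excision sequence then gives generation by restrictions of tautological classes.

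For the second statement, $[\Tcal\Scal_g^-]=\pi^*[\Tcal_g]$, because $\Tcal\Scal_g^-$ is by definition the preimage under the flat finite map $\pi:\Scal_g^-\rarr\mg$. The class $[\Tcal_g]$ is tautological: it is a Brill--Noether degeneracy locus, computed by Porteous' formula applied to the evaluation map $f_*L\rarr$ (jets along a divisor) over the relevant Picard variety, and its pushforward to $\mg$ is tautological by Faber's results. Since $\pi^*$ preserves tautological classes by the definition of $\R^\bullet(\Scal_g^-)$, this part is formal.

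The main obstacle is the first part: making the incidence model for the theta characteristics on the trigonal locus precise, and keeping track of the degenerate configurations. These are the cases where the tangency divisor is non-reduced, where the conic passes through the node, and where the scroll is $\mathbb{F}_3$. Each such case is a closed substratum whose class and Chow ring must be checked separately, as was done for the strata $\sfivetilde(\mu)$ in Proposition~\ref{prop: tautology sfive tile mu}.
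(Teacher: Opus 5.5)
Your outline matches the paper's route. You use the plane quintic model given by $|K_C-D|$, read odd $\eta$ off conics, and take an incidence variety of quintic and conic modulo $\PGL(3)$ or the stabiliser of the singular point; then Vistoli and excision. Your treatment of the second claim as $[\Tcal\Scal_g^-]=\pi^*[\Tcal_g]$ is fine, and the paper gives no argument there. The input is better quoted as Faber--Pandharipande's theorem that Hurwitz loci are tautological, since Porteous on a universal Picard variety does not by itself land in $\R^\bullet(\mg)$. For $g=5$ it is automatic, as $\CH^1(\Mcal_5)=\QQ\kappa_1$.

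The gap starts with the geometric dictionary. First, there is no $\mathbb{F}_3$ stratum when $g=5$. With $h$ the negative section and $H=h+3f$, a trigonal degree-$8$ curve on $\mathbb{F}_3$ has class $3h+8f$, so $h\cdot C=-1$. Every trigonal canonical genus $5$ curve lies on $\mathbb{F}_1$, in class $3h+5f$; your $2h+5f$ has degree $2$ on the fibres.

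Second, and more seriously, $K_C=\varphi^*\Ocal_{\PP^2}(2)(-p_0^+-p_0^-)$. So the canonical series is cut by the adjoint conics, those through $p_0$. A conic tangent to $\Gamma$ at five points off $p_0$ corresponds to no hyperplane of $\PP^4$. Passing through $p_0$ is therefore the generic case, not a degenerate one. The main stratum is the paper's $\Qscr^{(2)}$: a conic through $p_0$, transverse to the branches, tangent to $\Gamma$ at four more points.

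Third, $\mathbb{F}_1$ also carries the curves tangent to the $(-1)$-curve, i.e. the cuspidal quintics, which form a divisor in the trigonal locus. So $\Yscr$ should only require $\Gamma$ singular at $p_0$, as $\Qscr^{(2)}$ does.

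The genuinely degenerate configuration, besides non-reduced tangency, is higher contact of the conic with a branch at $p_0$, i.e. $p_0^\pm\in\supp\eta$; simple tangency is excluded by parity. You say such strata must be checked separately but give no mechanism. Excision needs exactly that their classes be tautological, since $i_*(\alpha|_Z)=\alpha\cdot[Z]$. The paper's idea is to realise the stratum as the degeneracy locus $h^0(C,\eta(-p_0^+))\neq0$ of $f_*\eta\rarr\eta|_{p_0^+}$ and apply Porteous.

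Be careful about where this stratum lives, though. Suppose $K_C-2D\sim 2p_0^+$ (the cusp case) and $\eta(-p_0^+)\cong\Ocal_C(E)$ with $E\geq0$. Then $2E\in|2D|$, and since $H^0(2D)=\Sym^2H^0(D)$ we can write $2E=A_1+A_2$ with $A_i\in|D|$. Distinct fibres are disjoint and $\deg A_i=3$, so $A_1=A_2=E$. Hence $\eta\cong\Ocal_C(D+p_0^+)$ has $h^0=2$ and is even, so the paper's $\Tcal^{(1)}$ contains no odd spin curves.

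What does occur is the nodal version: a conic through the node with contact order $3$ with one branch, tangent to $\Gamma$ at three more points. A parameter count makes this a divisor in $\Tcal\Scal_5^-$. Its points are odd, because a nodal trigonal genus $5$ curve has no theta characteristic with $h^0=2$: its $g^1_4$'s are $D+q$ and $K_C-D-q$. This divisor is covered neither by your $\Yscr$ nor by the paper's $\Qscr^{(2)}$, which assumes $p_0^+\notin\supp\eta$. The Porteous argument is needed here, on the double cover ordering $p_0^\pm$.

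Finally, your GRR step is asserted, not proved. On the universal curve $2D\equiv K-N$ modulo pullbacks, with $N$ traced by $p_0^\pm$. So GRR yields pushforwards of monomials in $K$ and $N$ (i.e. $\psi$-classes at $p_0^\pm$) that $\eta^{\otimes2}\cong\omega$ does not eliminate. The paper's ``methods very similar'' is equally unjustified at this point.
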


Before going into the proof, we recall the well known interpretation of genus 5 trigonal curves as plane quintics with one singularity which is either a node or a cusp. More precisely, let $C$ be a non--hyperelliptic trigonal curve of genus 5 and $D$ a trigonal divisor. The linear system $|K_C - D|$ gives rise to a birational map $\varphi : C \rarr \PP^2$ such that $\Gamma := \Im \,\varphi$ is a plane quintic with one singularity of $\delta$--invariant 1 by the genus formula. If $\{p_0\} =\Sing(\Gamma)$ and $\varphi^{-1}(p_0) = \{p_0^+,p_0^-\}$, the points $p_0^+$, $p_0^-$ are distinct if $p_0$ is a node, and $p_0^+ = p_0^-$ if $p_0$ is a cusp. Notice that the divisor $p_0^+ + p_0^-$ is the unique divisor in $|K_C -2D|$.

\begin{proof}[Proof of Theorem \ref{prop: tautology chow TS5}]
To prove that $\CH^\bullet(\Tcal\Scal_5^-)$ generated by restrictions of tautological classes from $\Scal_5^-$, we split the trigonal locus into the following locally closed strata:
\begin{itemize}[label=--]
    \item $\Tcal^{(1)}$: The locus of pairs $[C, \eta]$ such that $K_C - 2D \sim 2p_0^+$ for some $p_0^+ \in C$ and $h^0(C, \eta(-p_0^+)) \neq 0$.
    \item $\Tcal^{(2)}$: The complement of $\Tcal^{(1)}$ in $\Tcal\Scal_5^-$.
\end{itemize}

\vskip 0.5em
We study first $\Tcal^{(2)}$ which is a dense open subset of $\Tcal\Scal_5^-$. Let $[C, \eta] \in \Tcal^{(2)}$ and maintain the notation in the first paragraph of this subsection with the convention that $p_0^+ = p_0^-$ if $\Gamma$ has a cusp at $p_0$. For parity reasons and also taking into account that $p_0^+ \notin \supp\, \eta$ it follows that $\eta$ is given by a smooth conic through $p_0$ which is tangent to $\Gamma$ at four other points.

\vskip 0.5em
Consider thus the following incidence variety:
\begin{align*}
    \Qscr^{(2)} := \{(\Gamma, p_0, \dots, p_4)\colon \Gamma \in |\Ocal_{\PP^2}(5)| \text{ is singular at $p_0$, }\Gamma\cdot C_\pbf = 2p_0 + \dots + 2p_4\}.
\end{align*}
In this context, if $\pbf = (p_0, \dots, p_4)$ then $C_\pbf$ stands for the unique conic through the five points $p_0, \dots, p_4$. Let $\pi : \Qscr^{(2)} \rarr (\PP^2)^5$ be the projection remembering the points $(p_0, \dots, p_4)$ and consider the open locus $\Ucal \subset (\PP^2)^5$ of tuples $\pbf$ such that $C_\pbf$ is smooth. We obtain a birational isomorphism
\begin{align*}
    \Tcal^{(2)} \approx (\Qscr^{(2)}\gitquot\PGL(3))/\Sfrak_4.
\end{align*}
This exhibits $\Tcal^{(2)}$ as an open subset of  $(\Qscr^{(2)}\gitquot\PGL(3))/\Sfrak_4$. Methods very similar to the ones described for the loci $\sfivetilde$ allow us to see that $\CH^\bullet(\Tcal^{(2)})$ has Chow ring generated by restrictions of tautological classes from $\Scal_5^-$.

\vskip 0.5em
We next continue to study $\Tcal^{(1)}$. In this case, $\eta = \Ocal_C(p_0^+ + p_1 + p_2 + p_3)$ where $p_1 + p_2 + p_3$ is a divisor on $C$ such that $2p_1 + 2p_2 + 2p_3 \sim 2D$. Indeed, we check that $2p_2 + 2p_2 + 2p_3 \sim K_C - 2p_0^+ = K_C - (K_C - 2D) = 2D$. If $2\cdot \ell_0$ is the tangent cone to $\Gamma$ at $p_0$, the divisor $2p_0^+ + 2p_1 + \dots + 2p_3$ is cut on $\Gamma$ by a conic tangent to $\ell_0$ to $p_0^+$ as well as to $\Gamma$ at the points $p_1, p_2, p_3$. This time one considers the incidence variety
\[
\Qscr^{(1)}:= \left\{(\Gamma, C, p_0, \dots, p_3)\colon \begin{matrix}
    C \in |\Ocal_{\PP^2}(5)| \text{ has a cusp at $p_0$, }C \in |\Ocal_{\PP^2}(2)| \\
     \text{such that }\Gamma \cdot C = 4p_0 + 2p_1 + 2p_2 + 2p_3
\end{matrix}\right\}.
\]
The existence of a birational isomorphism $\Tcal^{(1)} \approx (\Qscr^{(1)}\gitquot\PGL(3))/\Sfrak_3$ describing $\Tcal^{(1)}$ as an open subset of $(\Qscr^{(1)}\gitquot\PGL(3))/\Sfrak_3$, giving access to the Chow ring of $\Tcal^{(1)}$. By similar means as described for $\sfivetilde$, the Chow ring $\CH^\bullet(\Tcal^{(1)})$ is generated by restrictions of tautological classes from $\sfive$. The tautology of $[\Tcal^{(1)}] \in \CH^\bullet(\Tcal\Scal_5^-)$ follows by Porteus' formula since $\Tcal^{(1)}$ is given by $h^0(C, \eta(-p_0^+)) \ne 0$.
\end{proof}

We can now conclude the proof of the main result of this work.
\begin{proof}[Proof of Theorem \ref{thm: sfive has CH = R}]
The main theorem follows by assembling together Corollary \ref{cor: CH = R for nonhyp nontrig}, Theorem \ref{thm: tautology of chow of moduli hyperelliptic spin}, and Proposition \ref{prop: tautology chow TS5} which, by Excision and push and pull, give the tautology of $\CH^\bullet(\sfive)$.  
\end{proof}

\bibliography{bibliography}
\bibliographystyle{amsalpha}

\end{document}